\newtheorem{theor}{Theorem}[section]
\newtheorem{prop}[theor]{Proposition}
\newtheorem{defin}[theor]{Definition}
\newtheorem{lem}[theor]{Lemma}
\newtheorem{example}[theor]{Example}
\newtheorem{remark}[theor]{Remark}
\newcommand{\B}{\mathbb{B}}
\newcommand{\R}{\mathbb{R}}
\newcommand{\C}{\mathbb{C}}
\newcommand{\Q}{\mathbb{Q}}
\newcommand{\Z}{\mathbb{Z}}
\newcommand{\Vol}{\operatorname {Vol}}
\newcommand{\K}{K\"ahler}
\newcommand{\Ric}{\operatorname {Ric}}
\newcommand{\D}{D^{M_{\Omega,\mu}}}
\begin{document}

\title[On the Bergman metric of Cartan--Hartogs domains]{On the Bergman metric of Cartan--Hartogs domains}

\author{Andrea Loi}
\address{(Andrea Loi) Dipartimento di Matematica \\
         Universit\`a di Cagliari (Italy)}
         \email{loi@unica.it}

\author{Roberto Mossa}
\address{(Roberto Mossa) Dipartimento di Matematica \\
         Universit\`a di Cagliari (Italy)}
        \email{roberto.mossa@unica.it}
        
        \author{Fabio Zuddas}
\address{(Fabio Zuddas) Dipartimento di Matematica \\
         Universit\`a di Cagliari (Italy)}
        \email{fabio.zuddas@unica.it}

\thanks{
The authors are supported by INdAM and  GNSAGA - Gruppo Nazionale per le Strutture Algebriche, Geometriche e le loro Applicazioni, by GOACT and MAPS- Funded by Fondazione di Sardegna, by the Italian Ministry of Education, University and Research through the PRIN 2022 project "Developing Kleene Logics and their Applications" (DeKLA), project code: 2022SM4XC8  and partially funded by PNRR e.INS Ecosystem of Innovation for Next Generation Sardinia (CUP F53C22000430001, codice MUR ECS00000038).}

\subjclass[2000]{53C55, 32Q15, 53C24, 53C42 .} 
\keywords{Bergman metric; \K\ metrics; Cartan-Hartogs domains;  \K-Einstein metrics; \K-Ricci solitons; homogeneous \K\ manifold;  Bergman dual}

\begin{abstract}
We investigate the Bergman metric and the \emph{Bergman dual} on Cartan–Hartogs (CH) domains. 
For a bounded domain $D\subset\C^n$ with Bergman kernel $K_D(z,\bar z)$, we define the Bergman dual of $(D,g_D)$ as the pair $(D^*,g_D^*)$ where $D^*\subset\C^n$ is the maximal domain on which $K_D^*(z,\bar z):=K_D(z,-\bar z)$ is positive and $g_D^*$ is the Kähler metric with Kähler form $\omega_D^*:=-\tfrac{i}{2}\partial\bar\partial\log K_D^*$. 
We prove that, for a Cartan–Hartogs domain $M_{\Omega,\mu}$, the following conditions are equivalent: 
(i) $M_{\Omega,\mu}$ is biholomorphic to the unit ball, 
(ii) its Bergman metric is a Kähler–Ricci soliton, and 
(iii) its Bergman dual is finitely projectively induced up to rescaling. 
Conditions (i) and (ii) should be viewed as natural analogues of classical rigidity results: Yau’s problem and Cheng’s conjecture in the Kähler–Einstein setting, as well as Sha’s recent theorem on Kähler–Ricci solitons~\cite{Sha2024}. 
By contrast, condition (iii) emphasizes the new notion of Bergman dual, inspired by the duality phenomenon for bounded symmetric domains and their compact duals. 
We also compare our approach with other canonical metrics on CH domains, namely the metrics $g_{\Omega,\mu}$ and $\hat g_{\Omega,\mu}$ recently considered in~\cite{LMZdual2024}, and discuss open problems concerning the maximal domain of definition of the Bergman dual.
\end{abstract}
 
\maketitle

\tableofcontents

\section{Introduction}
The study of the Bergman metric of a bounded domain $D\subset \C^n$ 
is a central theme in several complex variables and Kähler geometry. 
Recall that the \emph{Bergman metric} $g_D$ is the Kähler metric on $D$ whose associated form is
\[
\omega_D \;=\; \frac{i}{2}\,\partial\bar\partial \log K_D,
\]
where $K_D(z,\bar z)$ denotes the Bergman kernel of $D$ restricted to the diagonal.  

The prototypical example is the unit ball
\[
\B^n \;=\; \{ z \in \C^n : |z|^2 < 1 \}, 
\qquad |z|^2 \;=\; |z_1|^2 + \cdots + |z_n|^2,
\]
for which
\[
K_{\B^n}(z,\bar z) = \frac{n!}{\pi^n}\,\frac{1}{(1- |z|^2)^{\,n+1}},
\qquad
\omega_{\B^n} =- (n+1)\,\frac{i}{2}\,\partial\bar\partial \log(1-|z|^2).
\]
Thus, up to the constant factor $(n+1)$, the Bergman metric of $\B^n$ coincides with the hyperbolic metric of constant negative holomorphic sectional curvature.  

A classical problem  is to understand which curvature conditions on $g_D$ force $D$ to be biholomorphic to $\B^n$.  
Among the natural conditions, perhaps the most immediate is the requirement that the Bergman metric has \emph{constant holomorphic sectional curvature}.  
This assumption is extremely restrictive and leads to strong rigidity phenomena.  
Qi-Keng Lu~\cite{Lu66} proved that if $g_D$ on $D \subset \C^n$ 
is complete with constant holomorphic sectional curvature, then 
$D$ is biholomorphic to the unit ball.  
This result is now known as \emph{Lu’s uniformization theorem}.  
More recently, Ebenfelt, Treuer, and Xiao~\cite{EbenfeltTreuerXiao25} extended Lu’s theorem 
by removing the completeness assumption: they showed that if the Bergman metric has constant 
negative holomorphic sectional curvature, then $D$ is biholomorphic to the unit ball 
minus a relatively closed set of measure zero, across which every $L^2$-holomorphic function 
extends holomorphically.  

\medskip

A weaker but subtler condition is the \emph{Einstein condition} for the Bergman metric.  
Recall that if $D \subset \C^n$ is a bounded 
\emph{homogeneous} domain (i.e., its biholomorphism group acts transitively on $D$), 
then the Bergman metric is automatically KE (see, e.g., \cite[p. 279]{Kobayashi1969}).  
This naturally leads to the following question, posed by Yau~\cite[Problem~43]{Yau1982ProblemSection}:

\vskip 0.3cm
\noindent
\textbf{Question~1.}  
Let $D$ be a bounded domain and $g_D$ its Bergman metric, assumed to be complete.  
Suppose that $g_D$ is Kähler–Einstein (KE).  
Is it true that $D$ must be a bounded homogeneous domain?  

\medskip

For strictly pseudoconvex domains with smooth boundary, 
Question~1 admits a positive answer, following the resolution of a conjecture 
formulated by Cheng \cite{Cheng1979}.  
Cheng conjectured that if $D \subset \C^n$ is a bounded, smoothly bounded, 
strongly pseudoconvex domain whose Bergman metric is Kähler–Einstein, 
then $D$ must be biholomorphic to the unit ball.  
This conjecture was first established in complex dimension two by 
Fu–Wong~\cite{FuWong1997} and Nemirovski–Shafikov~\cite{NemirovskiShafikov2006}, 
and subsequently proved in full generality by Huang–Xiao~\cite{HuangXiao2021}.  
It is worth mentioning that any bounded homogeneous domain with $C^2$ boundary 
is biholomorphic to the unit ball, as shown independently by 
Wong~\cite{Wong1977} and Rosay~\cite{Rosay1979}.  
More recently, the analogue of Cheng’s conjecture has been investigated 
in the broader framework of \emph{Kähler–Ricci solitons} (KRS).  
In particular, if $D\subset \C^n$ is a bounded strictly pseudoconvex 
domain with smooth boundary and $g_D$ is its Bergman metric, then Sha~\cite{Sha2024} 
proved that whenever $g_D$ is a KRS, it must in fact be biholomorphic to the unit ball.  
These developments naturally suggest the following more general problem:  

\vskip 0.3cm

\noindent\textbf{Question~2.}  
Let $D$ be a bounded domain and $g_D$ its Bergman metric.  
If $g_D$ is a KRS, must it necessarily be KE?

\vskip 0.3cm
Note that, by the Bergman–Bochner construction, the Bergman metric \(g_D\) on a bounded domain \(D\) is the pullback of the Fubini–Study metric via the Bergman map \(\Phi_D\colon D\to \mathbb{C}P^{\infty}\); hence \(g_D\) is \(\infty\)-projectively induced (see, e.g. \cite{Kobayashi1969}). 
%Here \(\Phi_D\) is constructed as follows: choose an orthonormal basis \(\{\phi_j\}_{j\ge 0}\) of the Hilbert space \(A^2(D)\) of square-integrable holomorphic functions on \(D\), and set
%\[
%\Phi_D(z)=[\phi_0(z):\phi_1(z):\cdots].
%\]
%This is well defined since \(K(z,z)=\sum_{j\ge 0}|\phi_j(z)|^2>0\) and it is independent of the chosen basis up to a unitary change; moreover,
%\[
%\Phi_D^{*}\ \omega_{\mathrm{FS}}=\frac{i}{2}\,\partial\bar\partial \log K_D(z,\bar z)=\omega_D,
%\]
%where $\omega_{\mathrm{FS}}$ is the Fubini-Study form on $\mathbb{C}P^{\infty}$ 
%and $\omega_D$
On the other hand, there exist nontrivial, indeed, complete, Kähler–Ricci solitons that are \(\infty\)-projectively induced \cite{LoiSalisZuddas22}. Therefore, in Question~2 the crucial hypothesis is the \emph{Bergman} assumption rather than mere \(\infty\)-projective inducibility of an arbitrary Kähler metric.

\vskip 0.3cm

Another remarkable feature of the Bergman metric on $\B^n$
is that if one defines
\[
K^*_{\B^n}(z, \bar z):= K_{\B^n}(z, -\bar z)= \frac{n!}{\pi^n}\,\frac{1}{(1+ |z|^2)^{\,n+1}},
\]
then 
\[
-\log K^*_{\B^n}(z, \bar z)= (n+1)\log \!\Bigl[\tfrac{n!}{\pi^n}(1+ |z|^2)\Bigr]
\]
is still a Kähler potential of a metric $g^*_{\B^n}$ globally defined on $\C^n$.  
Its associated form is
\[
\omega^*_{\B^n}=(n+1)\frac{i}{2}\partial\bar\partial\log (1+|z|^2),
\]
which is exactly $\tfrac{n+1}{\pi}$ times the Fubini–Study form $\omega_{FS}$ on $\C P^n$,
restricted to the affine chart $\C^n\cong U_0=\{[Z_0:\cdots :Z_n] \mid Z_0\neq 0\}$.  

\medskip

A similar phenomenon occurs for bounded symmetric domains.  
Recall that every bounded symmetric domain can be decomposed as a product of irreducible factors, called \emph{Cartan domains}. According to É. Cartan’s classification, Cartan domains fall into two categories: classical and exceptional.
If $\Omega \subset \C^n$ is a bounded symmetric domain with Bergman metric $g_\Omega$ and Bergman kernel 
$K_{\Omega}$, then
\[
\omega^*_{\Omega}:=-\tfrac{i}{2}\partial\bar\partial\log K^*_{\Omega},
\qquad
K^*_{\Omega }(z, \bar z)= K_{\Omega }(z, -\bar z),
\]
defines a Kähler metric $g^*_\Omega$ globally on $\C^n$.  
In fact, there exists a Hermitian symmetric space of compact type $(\Omega_c , g_c)$, the \emph{dual} of $(\Omega, g_\Omega)$, together with an integer $d$ and holomorphic embeddings
\begin{equation}\label{embeddings}
\Omega \subset\C^n\stackrel{J}{\longrightarrow} \Omega_c\stackrel{BW}{\longrightarrow} \C P^d,
\end{equation}
where $J$ has dense image and $BW$ is the Borel–Weil embedding, satisfying $BW^*(g_{FS})=g_c$.  
Moreover, $\Omega_c=J(\C^n)\sqcup H$, where $H=BW^{-1}(\{Z_0=0\})$, with $[Z_0:\dots :Z_d]$ homogeneous coordinates on $\C P^d$.  
These embeddings have been widely used in the symplectic study of bounded symmetric domains and their duals (see \cite{DISCALALOI2008sympdual}, \cite{DISCALALOIROOS2008bisympl}, \cite{LOIMOSSA2011DistExp}, \cite{LMZdual2024}).  
It turns out that $J^*g_c=\alpha g^*_\Omega$ for some $\alpha >0$.  
Thus the metric $\alpha g^*_\Omega$ is {\em finitely projectively induced}, since the holomorphic map $\varphi:=BW\circ J : \C^n\to \C P^d$ satisfies $\varphi^*g_{FS}=\alpha g^*_\Omega$.  
When $\Omega =\B^n$, we recover $\Omega_c=\C P^n$, $g_c=g_{FS}$, $BW=\mathrm{Id}$, $J$ the natural inclusion of $\C^n=U_0$ into $\C P^n$, and $\alpha=\tfrac{n+1}{\pi}$.

\medskip

This motivates the following definition.

\begin{defin}\label{dualbergman}
Let $D \subset \C^n$ be a bounded domain centered at the origin, with Bergman metric $g_D$ and kernel $K_D (z, \bar z)$ restricted to the diagonal.  
A pair $(D^*, g_D^*)$ is called the \emph{Bergman dual} of $(D,g_D)$ if:
\begin{enumerate}
\item
$D^* \subset \C^n$ is a bounded domain centered at the origin;
\item
$g_D^*$ is a Kähler metric on $D^*$ with associated form
\[
\omega^*_D =- \tfrac{i}{2}\,\partial\bar\partial\log K^*_{D}\ \  (K^*_{D}>0),
\qquad
K^*_{D}(z, \bar z):=K_{D}(z,-\bar z),
\]
on  $D \cap D^*$;
\item
$D^*$ is the maximal domain of definition of $\log K^*_{D}$.
\end{enumerate}
\end{defin}

\medskip

This leads to the following natural question:

\vskip 0.3cm
\noindent
\textbf{Question~3.} Let $D$ be a bounded domain with complete Bergman metric $g_D$.  
Assume that $(D, g_D)$ admits a Bergman dual $(D^*, g^*_{D})$ 
such that $\alpha g_D^*$ is finitely projectively induced for some $\alpha>0$.  
Must $D$ then be biholomorphic to a bounded symmetric domain?
 
\medskip

In this paper we consider the previous questions when $D$ is a   {\em Cartan–Hartogs domain} 
(CH domain in the sequel).
CH domains are a one-parameter family of noncompact  domains of $\mathbb{C}^{n+1}$, given by:
\begin{equation}\label{defCH}
M_{\Omega, \mu}:=\left\{(z, w) \in \Omega \times\mathbb{C}\, |\, \left|w\right|^2<N_{\Omega }^\mu(z, \bar{z})\right\},
\end{equation}
where $\Omega  \subset \mathbb{C}^n$ is a Cartan domain,  
known as  the {\em base} of $M_{\Omega, \mu}$,   $N_{\Omega }(z, \bar{z})$ is its generic norm,  and  $\mu>0$ is a positive real parameter.  
Recall that 
\begin{equation}\label{genericnorm}
N_\Omega(z, \overline{z})=\left(V(\Omega)K_\Omega(z, \overline{z})\right)^{-\frac{1}{\gamma}},
\end{equation}
where
$V(\Omega)$ is the {\em Euclidean volume} of $\Omega$ and $\gamma$ the {\em genus} of $\Omega$.
Notice also that $M_{\Omega, \mu}$ is homogeneous iff 
$M_{\Omega, \mu} =\B^{n+1}$ iff $\Omega=\B^n$ and $\mu=1$.

\medskip
CH domains play a central role in complex analysis and K\"ahler geometry, offering non-homogeneous yet highly structured settings for explicit computations of balanced metrics, Rawnsley's $\varepsilon$-function, Bergman kernels, and automorphisms. Key advances include \emph{explicit Bergman kernel formulas}, notably Yamamori's closed forms~\cite{YAMAMORI2012a,YAMAMORI2012b} and the construction over \emph{bounded homogeneous} bases by Ishi--Park--Yamamori~\cite{IshiParkYamamori2017}, together with related results for products and egg-type domains~\cite{AhnPark2012JFA,YINLUROOS2004newclasses}. Further progress includes the characterization of K\"ahler--Einstein metrics~\cite{FENGTU2014} and extensions to generalized CH frameworks~\cite{HAO2016}.
%Moreover, the CH version of the Polydisk Theorem established by 
%Mossa and Zedda~\cite{MOSSAZEDDApol2022} extends the classical Polydisk Theorem to this setting,
%and recent advances in the symplectic geometry of CH domains 
%by the same authors~\cite{MOSSAZEDDAsymp2022} provide further insight into their structure.  

\medskip

The main result of this paper is the following theorem, which shows that Questions~1–3 have positive answers when restricted to CH domains.

\begin{theor}\label{mainteor}
A CH domain $M_{\Omega, \mu}$ equipped with its Bergman metric  $g_{M_{\Omega, \mu}}$ admits a Bergman dual 
$(M^*_{\Omega, \mu}, g^*_{M_{\Omega, \mu}})$.  
Moreover,  the following conditions are equivalent:
\begin{itemize}
\item [(i)]
$M_{\Omega,\mu}=\B^{n+1}$;
\item [(ii)]
$g_{M_{\Omega, \mu}}$ is KE;
\item [(iii)]
$g_{M_{\Omega, \mu}}$ is a KRS with $\mu\in\Q$;
\item [(iv)]
$\alpha g^*_{M_{\Omega, \mu}}$ is finitely projectively induced,  for some $\alpha\in \R^+$.
\end{itemize}
\end{theor}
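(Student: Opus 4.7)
The plan splits into two parts: (a) constructing the Bergman dual $(M^*_{\Omega,\mu},g^*_{M_{\Omega,\mu}})$ from the explicit Bergman kernel formula for CH domains, and (b) establishing the four-way equivalence, with the main leverage coming from the series expansion of $K_{M_{\Omega,\mu}}$ together with Calabi-type rigidity criteria for induced K\"ahler metrics.

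For (a), I would start from the Hartogs-type expansion: writing any $L^{2}$-holomorphic $f$ on $M_{\Omega,\mu}$ as $f(z,w)=\sum_{k\ge 0}w^{k}f_{k}(z)$ and computing the induced weighted norm on $\Omega$ yields an explicit series
\[
K_{M_{\Omega,\mu}}(z,w;\bar z,\bar w)\;=\;\frac{K_\Omega(z,\bar z)}{N_\Omega(z,\bar z)^{\mu}}\,\Phi\!\left(\frac{|w|^{2}}{N_\Omega(z,\bar z)^{\mu}}\right),
\]
where $\Phi(x)=\sum_{k\ge 0}a_{k}(\mu,\gamma)\,x^{k}$ has coefficients that are Pochhammer-type rational functions of $\mu$ and the genus $\gamma$ (see \cite{YAMAMORI2012a,YAMAMORI2012b,IshiParkYamamori2017}). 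Substituting $(\bar z,\bar w)\mapsto(-\bar z,-\bar w)$ replaces $N_\Omega(z,\bar z)$ by $N_\Omega(z,-\bar z)$, which is a polynomial positive in a neighborhood of the origin. The resulting alternating series defines $K^*_{M_{\Omega,\mu}}$, and its maximal open set of positivity is the sought dual $M^*_{\Omega,\mu}$.

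For (b), the implications (i)$\Rightarrow$(ii), (ii)$\Rightarrow$(iii) and (i)$\Rightarrow$(iv) are essentially immediate: the first two because the Bergman metric of $\B^{n+1}$ is hyperbolic and hence KE (so a fortiori a KRS, with $\mu=1\in\Q$), and the third from the computation already displayed in the Introduction showing that $\tfrac{\pi}{n+2}g^{*}_{\B^{n+1}}$ is induced by the standard inclusion $\C^{n+1}\hookrightarrow\C P^{n+1}$. The substantive work is in (iii)$\Rightarrow$(i) and (iv)$\Rightarrow$(i). For (iii)$\Rightarrow$(i), I would write the KRS equation $\Ric(g_{M_{\Omega,\mu}})-\lambda g_{M_{\Omega,\mu}}=\mathcal{L}_{X}g_{M_{\Omega,\mu}}$ and use the $U(1)\times\Aut(\Omega)$-symmetry of $g_{M_{\Omega,\mu}}$ (acting by $w\mapsto e^{i\theta}w$ and the isotropy on $\Omega$) to average the holomorphic vector field $X$ into a vertical one $X=\beta\,w\,\partial_{w}$. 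Combined with the explicit series above, the KRS equation then becomes a functional identity in $x=|w|^{2}/N_\Omega(z,\bar z)^{\mu}$ whose coefficients are rational functions of $\mu$ and $\gamma$; the hypothesis $\mu\in\Q$ is what upgrades finite-order vanishing at $x=0$ to an algebraic constraint, which a case-by-case inspection across Cartan's classification forces to admit no solution other than $\Omega=\B^{n}$, $\mu=1$. For (iv)$\Rightarrow$(i), projective inducibility of $\alpha g^{*}_{M_{\Omega,\mu}}$ forces $(K^{*}_{M_{\Omega,\mu}})^{\alpha}$ to coincide, up to multiplication by $|f|^{2}$ for a nowhere-vanishing holomorphic $f$, with a polynomial; plugging in the reflected series compels both termination of the $x$-expansion (so $\mu\in\N$) and polynomiality of the factors involving $K_\Omega(z,-\bar z)$ and $N_\Omega(z,-\bar z)$. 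By Calabi's diastasis criterion for projective inducibility, the latter fails on every Cartan domain except $\B^{n}$, and a comparison of leading coefficients then pins down $\mu=1$.

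The hardest step will be the (iii)$\Rightarrow$(i) implication: although the symmetry reduction is routine, translating the averaged KRS identity into a genuine diophantine obstruction on $(\mu,\gamma)$ requires delicate bookkeeping of the Pochhammer coefficients $a_{k}(\mu,\gamma)$, and the rationality of $\mu$ is used essentially to rule out transcendental near-solutions that would otherwise defeat the rigidity.
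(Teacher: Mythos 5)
Your outline has a structural gap before one even gets to the hard implications: you treat (ii)$\Rightarrow$(iii) as immediate, but a K\"ahler--Einstein Bergman metric is only a \emph{trivial} KRS, while condition (iii) also demands $\mu\in\Q$, which is a hypothesis on the domain and is not implied by (ii) alone -- your parenthetical justification ("hyperbolic, hence KE, so a fortiori a KRS with $\mu=1$") already presupposes (i). With the implications you actually propose ((i)$\Rightarrow$(ii), (i)$\Rightarrow$(iv), (iii)$\Rightarrow$(i), (iv)$\Rightarrow$(i)), condition (ii) is never shown to imply any of the others, so the four-way equivalence is not closed. Worse, your route cannot close it: your soliton analysis for (iii)$\Rightarrow$(i) uses $\mu\in\Q$ essentially, whereas (ii)$\Rightarrow$(i) must be proved for \emph{all} $\mu>0$. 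The paper does exactly this: it proves (ii)$\Rightarrow$(i) directly, for arbitrary $\mu$, by plugging the Bergman potential into the KE ODE of Wang--Yin--Zhang--Roos, writing $F(X)=Q(X)/(1-X)^{D+1}$ with $Q(0),Q(1)\neq0$, showing by a root-multiplicity count that $Q$ must be constant, and then invoking a separate lemma (matching the coefficients of $F$ against those of $c(1-X)^{-d}$ and analyzing the roots of $\chi$ in terms of $(r,a,b)$) to force $\Omega=\B^n$, $\mu=1$. Nothing in your sketch replaces this step.

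The two implications you do flag as substantive are also not carried by the sketch. For (iii)$\Rightarrow$(i), the reduction of the soliton field to $\beta\,w\,\partial_w$ by averaging over $U(1)\times{\rm Isot}(\Omega)$ is asserted as routine, but invariance under the compact isotropy only pins down the jet of $X$ at the origin, not the global form of a holomorphic field on a non-homogeneous domain; and the "case-by-case inspection across Cartan's classification" is precisely the hard content, for which you give no mechanism. The paper avoids the soliton ODE altogether: it proves (Proposition~\ref{mainprop}) that any KRS induced from $(M_{\Omega,\mu},g_{M_{\Omega,\mu}})$ with $\mu\in\Q$ is trivial, by showing via the closed form of the kernel that $e^{\D_p}$ is Nash algebraic and citing the rigidity result of \cite{PRIMO}; this reduces (iii) to (ii). For (iv)$\Rightarrow$(i), your key claim -- that projective inducibility "fails on every Cartan domain except $\B^n$" for the base factors -- is false: for \emph{every} Cartan domain the rescaled dual $\alpha g^*_\Omega$ is finitely projectively induced via the Borel--Weil embedding, as recalled in the paper's introduction, so no obstruction can come from $N_\Omega(z,-\bar z)$ or $K_\Omega(z,-\bar z)$ alone (the generic norm is in any case a polynomial). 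The actual obstruction sits in the fiber direction: restricting to the slice $z=0$, Calabi's criterion forces $F^{-\alpha}(-|w|^2)$ (note the exponent is $-\alpha$, not $+\alpha$, since the potential is $-\alpha\log K^*$) to have a finite expansion, which yields $(1+X)^{(D+1)\alpha}=P(X)Q^{\alpha}(-X)$, forces $Q$ constant, and only then does the same lemma as above deliver $\Omega=\B^n$, $\mu=1$. As written, your argument for (iv) would not detect the non-ball cases.
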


\begin{remark}\rm 
The equivalence between conditions (i) and (ii) in Theorem~\ref{mainteor}, 
that is, the fact that $M_{\Omega,\mu}$ is biholomorphic to the unit ball if and only if its Bergman metric is KE, 
has been extended in private communications (unpublished) by Yihong Hao and others 
to the broader setting of Hartogs over symmetric domains, namely those domains given by \eqref{defCH}
 whose base $\Omega$ is a bounded symmetric domain not necessarily irreducible.  
We warmly thank them for sharing this information with us.
\end{remark}

\vskip 0.3cm
The paper is organized as follows.  
In Section~\ref{proof} we prove Theorem~\ref{mainteor}.  
The argument combines explicit formulas for the Bergman kernel of Cartan–Hartogs domains with the analysis of the nonlinear ODE governing Kähler–Einstein potentials.  
A key step is to show that if the Bergman metric of a CH domain is a Kähler–Ricci soliton, then it must in fact be KE; this reduction is achieved by means of algebraic techniques inspired by Nash's theory of algebraic functions (cf. Proposition \ref{mainprop}).  
Section~\ref{maximal} investigates the maximal domain of definition of the Bergman dual of a CH domain and formulates an open problem in analogy with Calabi’s classical questions on the diastasis function.  
Finally, Section~\ref{comparison} compares the Bergman metric with other canonical metrics on CH domains, such as the metrics $g_{\Omega,\mu}$ and $\hat g_{\Omega,\mu}$ introduced in~\cite{LMZdual2024}, and discusses how our approach based on Bergman duality relates to the broader framework of Kähler duality.  
\section{Proof of Theorem \ref{mainteor}}\label{proof}

A key step in the proof of Theorem \ref{mainteor}
is the explicit expression of the Bergman kernel of $M_{\Omega, \mu}$
given in \cite{YINLUROOS2004newclasses}.
Recall that a Cartan domain $\Omega$ is uniquely determined by a triple of integers $(r,a,b)$,
where $r$ 
represents the rank of $\Omega$, namely the maximal dimension of a complex totally geodesic submanifold of $\Omega$,
and $a$ and $b$ are positive integers such that 
\begin{equation}\label{genus}
\gamma=(r-1)a+b+2
\end{equation}
and
$$n = r + \frac{r (r-1)}{2} a + r b,$$
where  $n$ is the dimension of  $\Omega$ .
Now, by 
 \cite[Corollary 3.5]{YINLUROOS2004newclasses}  
we immediately find that  the Bergman kernel for $M_{\Omega, \mu}$ is given by:

\begin{equation}\label{BerKer}
K_{M_{\Omega, \mu}}(z, w) = \frac{1}{\mu \chi(0) V} F \left( \frac{|w|^2}{N^\mu_\Omega(z, \bar z)} \right) N_\Omega^{-\gamma - \mu}(z, \bar z)
\end{equation}
where $V$ is the Euclidean volume of $\Omega$,
the function $F(X)$  is defined as (compare with   \cite[p. 14]{YINLUROOS2004newclasses} with $k = \frac{1}{\mu}$)
\begin{equation}\label{definizioneF}
F(X) =\sum_{m=0}^{\infty}  \mu(m+1)\cdot \chi\left(\mu(m+1)\right) X^m
\end{equation}
and  $\chi(s)$ is the polynomial defined as
\begin{equation}\label{defChi}
\chi(s)= \prod_{i=1}^{r} \prod_{l=0}^{b + (r-i)a} [s + 1 + (i-1) \frac{a}{2} + l].
\end{equation}

The following proposition, interesting on its own sake, allows us to pass from the KRS condition  to the KE  one
for the Bergman metric of a  CH domain.
Similar results were obtained for the case of definite or indefinite complex space forms \cite{LMpams}  and   for homogeneous \K\ manifolds \cite{PRIMO, LMrighom, LMZuniversal}.  
Recall that a \emph{\K\--Ricci soliton} (KRS) on a complex manifold $M$ is a pair 
$(g,X)$ where $g$ is a \K\ metric on $M$  and $X$ is a real holomorphic vector field on $M$
such that 
\[
\Ric(g)\;=\;\lambda\,g\;+\;\mathcal{L}_X g,
\]
for some $\lambda\in\mathbb{R}$, where $\mathcal{L}_X g$ denotes the Lie derivative of 
the metric $g$ with respect to $X$.
A KRS $(g,X)$ is \emph{trivial} if $X$ is a Killing field for $g$
(i.e.\ $\mathcal{L}_X g=0$), so that the soliton equation reduces to
\[
\Ric(g)\;=\;\lambda\,g,
\]
i.e.\ $g$ is a KE metric. 
Finally, a KRS $(g,X)$ on a complex manifold $M$ is said to be \emph{induced}
from a Kähler manifold $(N,h)$ if there exists a holomorphic map
$\varphi: M \to N$ such that $g = \varphi^{*}h$.

\begin{prop}\label{mainprop}
Let $(g,X)$ be a KRS on a complex manifold $M$, induced from 
$(M_{\Omega, \mu}, g_{M_{\Omega, \mu}})$ with $\mu \in \Q$.  
Then the soliton is \emph{trivial}, i.e. $g$ is Kähler–Einstein.
\end{prop}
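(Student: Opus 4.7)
The plan is to follow the strategy developed in \cite{LMpams, PRIMO, LMrighom, LMZuniversal} for analogous rigidity statements on complex space forms and homogeneous Kähler manifolds: transfer the induced KRS equation on $M$ to a functional identity on an open subset of $M_{\Omega,\mu}$ itself, and then use the explicit Bergman kernel \eqref{BerKer} to convert this identity, when $\mu\in\Q$, into an algebraic relation that only the trivial soliton can satisfy.

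First, I would establish a hereditary-type lemma. If $g=\varphi^{*}g_{M_{\Omega,\mu}}$ for a holomorphic $\varphi\colon M\to M_{\Omega,\mu}$, then after restricting to an open set on which $\varphi$ is a holomorphic immersion, the soliton equation $\Ric(g)=\lambda g+\mathcal{L}_X g$ pulls back to an identity for $\Phi=-\log K_{M_{\Omega,\mu}}$. Using $\mathcal{L}_X\omega=i\partial\bar\partial f$ for a real function $f$ and $\Ric(\omega)=-i\partial\bar\partial\log\det(\partial_i\bar\partial_{\bar j}\Phi)$, the soliton condition forces $-\log\det(\partial_i\bar\partial_{\bar j}\Phi)-\lambda\Phi$ to be the real part of a holomorphic function along $\varphi(M)$; since $\varphi(M)$ has non-empty interior, the identity extends to an open subset of $M_{\Omega,\mu}$.

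Next, substituting \eqref{BerKer} gives $\Phi=(\gamma+\mu)\log N_\Omega-\log F(X)+\text{const}$ with $X=|w|^2/N_\Omega^\mu$, and a block computation of $\det(\partial\bar\partial\Phi)$ in the $(z,w)$ variables, exploiting the factorization of $\Phi$ and the homogeneity of $\partial_X F$, reduces the equation to a relation of the form $P(X,N_\Omega;F,F',F'',\ldots)=h+\bar h$, where $P$ is polynomial in its arguments and $h$ is a holomorphic function on $M_{\Omega,\mu}$. The rationality $\mu\in\Q$ then places $F$ and all relevant powers of $N_\Omega$ in a single field of Nash-algebraic functions, and arguments modelled on \cite{LMrighom,LMZuniversal}, combined with matching of Taylor coefficients at $X=0$ and $z=0$ via \eqref{definizioneF}--\eqref{defChi}, should force $h$ to be constant, i.e.\ $\mathcal{L}_X g=0$. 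Hence $X$ is Killing for $g$ and $g$ is Kähler--Einstein.

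The main obstacle I expect is the second step: the block determinant must be organised cleanly enough to separate the genuinely Ricci-theoretic contribution from the soliton potential, so that the resulting identity becomes a tractable algebraic relation in $F,F',\ldots$ and in $N_\Omega^\mu$. The assumption $\mu\in\Q$ is essential here, as it keeps all quantities inside one field of Nash-algebraic functions; without it, the power-series coefficients of $F$ would be transcendentally entangled with the combinatorial data $(r,a,b)$, requiring a substantially different argument.
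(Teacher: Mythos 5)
Your proposal has a genuine gap, and in fact the route you sketch cannot work as stated. The proposition concerns a KRS on an arbitrary complex manifold $M$ induced from $(M_{\Omega,\mu},g_{M_{\Omega,\mu}})$, so $\varphi\colon M\to M_{\Omega,\mu}$ is in general a Kähler immersion of a \emph{lower-dimensional} manifold; the assertion that ``$\varphi(M)$ has non-empty interior'' is false except when $\dim M=n+1$, so your identity cannot be propagated to an open subset of $M_{\Omega,\mu}$. More seriously, the soliton equation on $M$ involves $\Ric(g)$, and Ricci curvature is not hereditary under Kähler immersions (the Gauss equation introduces second–fundamental–form terms), so substituting the ambient potential $\Phi=-\log K_{M_{\Omega,\mu}}$ and computing the ambient complex Hessian determinant does not encode the intrinsic soliton equation of $(M,g)$ at all. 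Even in the only case where your scheme is meaningful ($M$ an open subset of $M_{\Omega,\mu}$ and $\varphi$ locally biholomorphic), the decisive step is missing: you reduce to an identity ``$P(X,N_\Omega;F,F',\dots)=h+\bar h$'' and then say that Nash algebraicity ``should force $h$ to be constant'' by arguments modelled on the cited papers. That is precisely the nontrivial rigidity content, and it is asserted rather than proved.

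For comparison, the paper avoids curvature transfer entirely and works at the level of potentials: it first derives a closed rational form of the Bergman kernel (Lemma~\ref{lem:K-closed}), deduces that for $\mu\in\Q$ the exponential of Calabi's diastasis $\D_p$ of $g_{M_{\Omega,\mu}}$ is Nash algebraic in $z,\bar z,w,\bar w$ (Lemma~\ref{lem:diast}), then uses the \emph{hereditary property of the diastasis} (which, unlike Ricci, does restrict: $D^g_p=\D_{f(p)}\circ f$) to place the diastasis of the induced metric on $M$ in the class $\widetilde{\mathcal F}$ of \cite{PRIMO}, and finally invokes \cite[Proposition~4.1]{PRIMO}, which is exactly the statement that a KRS with such an algebraic diastasis is trivial. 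Your intuition that rationality of $\mu$ confines everything to a field of Nash algebraic functions is correct and is the same mechanism the paper exploits, but you need to (a) replace the curvature-pullback step by the diastasis hereditary property (or restrict to and justify the full-dimensional case), and (b) either cite or reprove the rigidity statement of \cite[Proposition~4.1]{PRIMO} instead of leaving it as a heuristic.
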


In order to prove the  proposition, we need of following two lemmas.

\begin{lem}[Closed form of the Bergman kernel]\label{lem:K-closed}
Let $\chi(s)$ be the polynomial defined in \eqref{defChi}, and set
\[
P(t):=t\,\chi(t)=\sum_{m=0}^{D} a_m\,t^m \qquad\text{(so $a_0=0$ and $D=\deg P=\deg\chi+1$)}.
\]
Let $F$ be defined by \eqref{definizioneF}, namely
\[
F(X)=\sum_{j=0}^{\infty} \mu\,(j+1)\,\chi\!\big(\mu(j+1)\big)\,X^j
=\sum_{j=0}^{\infty} P\!\big(\mu(j+1)\big)\,X^j.
\]
For $(z,w)\in M_{\Omega,\mu}$ we have
$X=\frac{|w|^2}{N_{\Omega}^{\mu}(z,\bar z)}\in[0,1)$.
Then the Bergman kernel \eqref{BerKer} of $M_{\Omega,\mu}$ admits the closed form
\begin{equation}\label{eq:K-closed}
K_{M_{\Omega, \mu}}(z, w)
=\frac{N_{\Omega}^{\,\mu-\gamma}(z,\bar z)}{\mu\,\chi(0)\,V}\;
\sum_{m=0}^{D} a_m\,\mu^{m}\sum_{q=1}^{m} S(m,q)\,q!\;
\frac{|w|^{2(q-1)}}{\big(N_{\Omega}^{\mu}(z,\bar z)-|w|^{2}\big)^{q+1}},
\end{equation}
where $S(m,q)$ denotes the Stirling numbers of the second kind, with the conventions
$S(0,0)=1$ and $S(m,0)=0$ for $m\ge1$.
\end{lem}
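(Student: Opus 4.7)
The plan is purely algebraic/combinatorial: take the power-series definition of $F$ and re-express it as a finite linear combination of rational functions of $X$, then substitute $X=|w|^2/N_\Omega^{\mu}$ and plug into \eqref{BerKer}. All series converge absolutely because $X\in[0,1)$ on $M_{\Omega,\mu}$, which justifies all interchanges of summation.

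First, I would expand $P(\mu(j+1))=\sum_{m=0}^{D}a_m\mu^{m}(j+1)^{m}$ and interchange sums to get
\[
F(X)=\sum_{m=0}^{D}a_m\,\mu^{m}\sum_{j=0}^{\infty}(j+1)^{m}X^{j}.
\]
The inner series is the standard generating function for $m$-th powers. The key identity is the Stirling expansion of monomials into falling factorials, $k^{m}=\sum_{q=0}^{m}S(m,q)\,k(k-1)\cdots(k-q+1)$, combined with $\sum_{k\ge 0}k(k-1)\cdots(k-q+1)X^{k}=q!\,X^{q}/(1-X)^{q+1}$. Setting $k=j+1$ yields
\[
\sum_{j=0}^{\infty}(j+1)^{m}X^{j}=\frac{1}{X}\sum_{k=1}^{\infty}k^{m}X^{k}=\sum_{q=1}^{m}S(m,q)\,q!\,\frac{X^{q-1}}{(1-X)^{q+1}},
\]
where the $q=0$ term drops out (and for $m=0$ only the convention $a_{0}=0$ is used, so nothing is lost).

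Next, I would substitute $X=|w|^{2}/N_{\Omega}^{\mu}(z,\bar z)$ and simplify:
\[
\frac{X^{q-1}}{(1-X)^{q+1}}=\frac{|w|^{2(q-1)}\,N_{\Omega}^{\mu(q+1)}}{N_{\Omega}^{\mu(q-1)}\bigl(N_{\Omega}^{\mu}-|w|^{2}\bigr)^{q+1}}=N_{\Omega}^{\,2\mu}\,\frac{|w|^{2(q-1)}}{\bigl(N_{\Omega}^{\mu}-|w|^{2}\bigr)^{q+1}}.
\]
Collecting the two $N_{\Omega}$-powers, i.e. the factor $N_{\Omega}^{2\mu}$ coming from the rational function and the prefactor $N_{\Omega}^{-\gamma-\mu}$ in \eqref{BerKer}, produces the overall weight $N_{\Omega}^{\,\mu-\gamma}$, and inserting the result into \eqref{BerKer} gives exactly \eqref{eq:K-closed}.

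There is no genuine obstacle here: the statement is essentially a bookkeeping lemma that rewrites the series defining $F$ as a finite sum of geometric-type closed forms. The only mild care needed is the interchange of summations (immediate since $P$ is a polynomial of degree $D$, so one is really looking at a finite linear combination of convergent power series on $|X|<1$) and the choice of Stirling convention so that the double sum starts at $q=1$, which is crucial to obtain the factor $|w|^{2(q-1)}$ in the numerator rather than an apparent pole at $w=0$.
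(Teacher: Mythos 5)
Your proposal is correct and follows essentially the same route as the paper's proof: interchange of the (finite) sum over $m$ with the series in $j$, the Stirling-number expansion of $(j+1)^m$ into falling factorials together with the negative binomial series, and the substitution $X=|w|^2/N_\Omega^\mu$ producing the factor $N_\Omega^{2\mu}$ that combines with $N_\Omega^{-\gamma-\mu}$. The only difference is cosmetic bookkeeping (your shift $k=j+1$ and factor $1/X$ versus the paper's direct use of $(j+1)_{\underline{q}}=q!\binom{j+1}{q}$), and you handle the $q=0$ and $m=0$ conventions in the same way.
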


\begin{proof}
Since $P$ has finite degree $D$, we may exchange the order of summation:
\[
F(X)=\sum_{j=0}^{\infty} P\!\big(\mu(j+1)\big)\,X^j
=\sum_{j=0}^{\infty}\sum_{m=0}^{D} a_m\big(\mu(j+1)\big)^m X^j
=\sum_{m=0}^{D} a_m\,\mu^{m}\sum_{j=0}^{\infty}(j+1)^{m} X^j .
\]
Let $(x)_{\underline{q}}=\frac{\Gamma(x+1)}{\Gamma(x-q+1)}$ be the Pochhammer symbol.
Using $x^{m}=\sum_{q=0}^{m} S(m,q)\,(x)_{\underline{q}}$,  with $x=j+1$, we get
\[
\sum_{j=0}^{\infty}(j+1)^{m} X^j
=\sum_{q=0}^{m} S(m,q)\sum_{j=0}^{\infty}(j+1)_{\underline{q}}\,X^j
=: \sum_{q=0}^{m} S(m,q)\,A_q(X).
\]
Since $(j+1)_{\underline{q}}=q!\binom{j+1}{q}$, we have
\[
A_q(X)=q!\sum_{j=0}^{\infty}\binom{j+1}{q}X^j
=
\begin{cases}
\dfrac{1}{1-X}, & q=0,\\[6pt]
q!\,X^{\,q-1}\,\dfrac{1}{(1-X)^{q+1}}, & q\ge1,
\end{cases}
\]
where for $q\ge1$ we used the index shift $j=n+q-1$ and the negative binomial series
$\sum_{n\ge0}\binom{n+q}{q}X^n=(1-X)^{-(q+1)}$.
Therefore,
\[
\sum_{j=0}^{\infty}(j+1)^{m} X^j
=\sum_{q=1}^{m} S(m,q)\,q!\,\frac{X^{\,q-1}}{(1-X)^{q+1}},
\]
since the $q=0$ term vanishes (if $m\ge1$ then $S(m,0)=0$, while for $m=0$ one has $a_0=0$).
Thus
\[
F(X)=\sum_{m=0}^{D} a_m\,\mu^{m}\sum_{q=1}^{m} S(m,q)\,q!\,
\frac{X^{\,q-1}}{(1-X)^{q+1}}.
\]
Now take $X=\dfrac{|w|^{2}}{N_{\Omega}^{\mu}(z,\bar z)}$. Since $|w|^{2}<N_{\Omega}^{\mu}(z,\bar z)$ on $M_{\Omega,\mu}$, we have $X\in[0,1)$ and
\[
1-X=\frac{N_{\Omega}^{\mu}-|w|^{2}}{N_{\Omega}^{\mu}},\qquad
X^{q-1}(1-X)^{-(q+1)}
=(N_{\Omega}^{\mu})^{2}\,
\frac{|w|^{2(q-1)}}{(N_{\Omega}^{\mu}-|w|^{2})^{q+1}}.
\]
Hence
\[
F\!\Big(\tfrac{|w|^{2}}{N_{\Omega}^{\mu}}\Big)
=(N_{\Omega}^{\mu})^{2}
\sum_{m=0}^{D} a_m\,\mu^{m}\sum_{q=1}^{m} S(m,q)\,q!\,
\frac{|w|^{2(q-1)}}{(N_{\Omega}^{\mu}-|w|^{2})^{q+1}}.
\]
By \eqref{BerKer},
\[
K_{M_{\Omega,\mu}}(z,w)
=\frac{1}{\mu\,\chi(0)\,V}\;F\!\Big(\tfrac{|w|^{2}}{N_{\Omega}^{\mu}}\Big)\;
N_{\Omega}^{-\gamma-\mu},
\]
and since $(N_{\Omega}^{\mu})^{2}\,N_{\Omega}^{-\gamma-\mu}=N_{\Omega}^{\mu-\gamma}$,
we obtain \eqref{eq:K-closed}.
\end{proof}

Let us recall that if $g$ is a Kähler metric on a complex manifold $M$, then the diastasis function $D^g_q$ of $g$ centered at a point $q \in M$ (see \cite{CALABI1953diast}) is the unique Kähler potential with the property that, for any holomorphic coordinate system on $M$ centered at $q$, its power series expansion at the origin contains no purely holomorphic or purely antiholomorphic terms.  

Given a coordinate system centered at $q$ and a Kähler potential $\phi: U \to \R$ for $g$ defined on a neighborhood $U$ of $q$, let $\tilde{\phi}$ denote its analytic continuation to a neighborhood of the diagonal in $U \times U$. Then (see \cite{CALABI1953diast})
\begin{equation}\label{eq:diaCal}
D^g_q(z)=\tilde \phi(z,\bar z)+\tilde \phi(0,0)-\tilde \phi(z,0) - \tilde \phi(0,\bar z).
\end{equation}

Before proceeding, let us recall the following notion (see \cite{PRIMO}, \cite{LMrighom}):  
a holomorphic function $f$ on a complex manifold $M$ is said to be a \emph{holomorphic Nash algebraic function} (in short \emph{Nash}) if, for every point $p \in M$, there exists a neighborhood $U$ of $p$ and a nonzero polynomial $P(z,y)$ in the variables $z \in U$ and $y \in \C$ such that
$$
P(z, f(z)) \equiv 0 \quad \text{for all } z \in U.
$$
In other words, $f$ is holomorphic and algebraic over the ring of holomorphic functions on $M$.

\begin{lem}\label{lem:diast}
Let $p \in M_{\Omega, \mu}$, and let $\D_p$ denote the diastasis function of $g_{M_{\Omega, \mu}}$ centered at $p$.  
If $\mu \in \Q$, then $e^{\D_p}$ is Nash in the variables $z, \bar z, w, \bar w$.  
%In particular, if $p=0=(0,0)\in M_{\Omega, \mu}$ is the origin, one has
%\begin{equation}\label{eq:diaor}
%\D_0(z,w)=\log K_{M_{\Omega, \mu}}(z, w).
%\end{equation}
\end{lem}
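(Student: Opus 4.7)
The plan is to combine the explicit closed form of the Bergman kernel from Lemma~\ref{lem:K-closed} with the standard expression of the exponential of the diastasis as a ratio of polarized Bergman kernels. Letting $\phi=\log K_{M_{\Omega,\mu}}$ be the canonical Bergman potential and writing $\tilde K(z,w;\bar u,\bar v)$ for its sesqui-holomorphic extension (obtained by treating $\bar z$ and $\bar w$ as independent from $z$ and $w$), formula \eqref{eq:diaCal} yields
\[
e^{\D_p(z,w)}\;=\;\frac{\tilde K(z,w;\bar z,\bar w)\,\tilde K(p_1,p_2;\bar p_1,\bar p_2)}{\tilde K(z,w;\bar p_1,\bar p_2)\,\tilde K(p_1,p_2;\bar z,\bar w)},
\]
where $p=(p_1,p_2)$. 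Since $\tilde K$ is positive on the diagonal, the four factors are holomorphic and nonvanishing in a neighborhood of $p$, the logarithms disappear from the ratio, and the statement reduces to showing that $\tilde K(z,w;\bar u,\bar v)$ is Nash in the eight variables $(z,w,\bar u,\bar v)$: fixing $(\bar u,\bar v)=(\bar p_1,\bar p_2)$ or $(z,w)=(p_1,p_2)$ preserves algebraicity, and Nash functions form a ring closed under quotients by nonvanishing elements.

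The hypothesis $\mu\in\Q$ enters precisely at the point where one has to control $N_\Omega^\mu$. The generic norm $N_\Omega(z,\bar z)$ of the Cartan domain $\Omega$ is a polynomial in $(z,\bar z)$, and hence its polarization $N_\Omega(z,\bar u)$ is a polynomial in $(z,\bar u)$. Writing $\mu=a/b$ with $a\in\Z$ and $b\in\N$, $b>0$, the function $Y:=N_\Omega(z,\bar u)^\mu$ satisfies $Y^b-N_\Omega(z,\bar u)^a=0$, hence is algebraic over $\C[z,\bar u]$; the same holds for $N_\Omega^{\mu-\gamma}$ since $\gamma\in\Z$. Substituting $|w|^2\mapsto w\bar v$ and $N_\Omega^\mu(z,\bar z)\mapsto N_\Omega(z,\bar u)^\mu$ in the closed form \eqref{eq:K-closed} then exhibits $\tilde K(z,w;\bar u,\bar v)$ as a rational expression in $w$, $\bar v$ and $Y$, with coefficients polynomial in $(z,\bar u)$. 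In particular $\tilde K$ is Nash in $(z,w,\bar u,\bar v)$, and by the reduction above $e^{\D_p}$ is Nash in $(z,\bar z,w,\bar w)$, as desired.

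The argument is essentially algebraic, so I do not expect serious conceptual obstacles. The only care required is in writing down the polarized version of \eqref{eq:K-closed} and keeping track of the defining polynomial $Y^b-N_\Omega^a$ of $N_\Omega^\mu$ under the substitutions that specialize two of the four antiholomorphic slots to $\bar p$: positivity of $K_{M_{\Omega,\mu}}$ together with holomorphic extendibility of $N_\Omega^\mu$ to a neighborhood of the diagonal ensures that the branches of the roots involved can be chosen coherently, so that the algebraic relations are preserved at every stage and the final ratio is unambiguously Nash.
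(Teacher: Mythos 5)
Your proposal is correct and follows essentially the same route as the paper's own (much terser) proof: the closed form \eqref{eq:K-closed} of the Bergman kernel, the polynomiality of the generic norm $N_\Omega$, the rationality of $\mu$ forcing $N_\Omega^{\mu}$ (and $N_\Omega^{\mu-\gamma}$) to be algebraic, and Calabi's formula \eqref{eq:diaCal} to pass from the polarized kernel to $e^{\D_p}$. The only difference is one of detail: you make explicit the polarization, the quotient formula for $e^{\D_p}$, and the defining relation $Y^b-N_\Omega^a=0$, all of which the paper leaves implicit.
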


\begin{proof}
It is well known (see, e.g., \cite[Sec.~1.3]{YINLUROOS2004newclasses}) that the generic norm of a Cartan domain $\Omega$ can be written as
$$
N_{\Omega}(z, \bar{z})=1-m_1(z, \bar{z})+m_2(z, \bar{z})-\cdots+(-1)^r m_r(z, \bar{z}),
$$
where $m_1, \ldots, m_r$ are polynomials on $\Omega \times \bar{\Omega}$, homogeneous of respective bidegrees $(1,1), \ldots,(r,r)$.  
This fact, together with \eqref{eq:K-closed} and \eqref{eq:diaCal}, shows that for $\mu \in \Q$, $K_{M_{\Omega, \mu}}(z, w)$ (which is the \K\ potential for the Bergman metric $g_{M_{\Omega, \mu}}$)   is Nash in $z, \bar z, w, \bar w$.
%i.e., it proves \eqref{eq:diaor}
Thus, if $\mu \in \Q$, then $\D_p$ is Nash in $z, \bar z, w, \bar w$. The proof is complete.
\end{proof}

We are now in a position to prove Proposition \ref{mainprop}.

\begin{proof}[Proof of Proposition \ref{mainprop}]
Let $(M,g)$ be a Kähler submanifold of $(M_{\Omega, \mu}, g_{M_{\Omega, \mu}})$.  
By the hereditary property of the diastasis function (see \cite{CALABI1953diast}), the diastasis function of $g$ centered at a point $p \in M$ is given by
\[
D_p^g = \D_{f(p)} \circ f,
\]
where $f: M \to M_{\Omega, \mu}$ is a holomorphic map such that $g = f^* g_{M_{\Omega, \mu}}$.  
Combining this with Lemma \ref{lem:diast}, and using the notation introduced in \cite{PRIMO}, we obtain that, in any coordinate system on $M$ centered at $p$, one has
$
D_p^g \in \widetilde{\mathcal{F}}.
$
Hence, by \cite[Proposition~4.1]{PRIMO}, it follows that any KRS $(g,X)$ on $M$ is \emph{trivial}, i.e., $g$ is Kähler--Einstein.
\end{proof}

\begin{remark}\rm
We believe that Proposition \ref{mainprop} should remain valid for every $\mu > 0$.  
However, when $\mu$ is irrational, the analytic continuation of $e^{D_p^{M_{\Omega, \mu}}}$ cannot be expressed as a product of powers of Nash functions.  
This prevents the direct application of \cite[Proposition~4.1]{PRIMO}, and therefore the validity of Proposition \ref{mainprop} cannot, at present, be extended to all $\mu > 0$.
\end{remark}

\begin{proof}[Proof  of Theorem \ref{mainteor}]
To prove 
$(i)\iff (ii)$ it is enough to show that
$(ii)\Rightarrow (i)$
since the Bergman metric on the unit ball is KE.

Now,  
\cite[Lemma 5]{WANGYINZHANGROSS2006KEonHartogs} asserts that 
a \K\  metric $g$ with associated \K\  form   $\omega=\frac{i}{2\pi}\partial\bar\partial \Phi$ on the CH domain
$M_{\Omega, \mu}$ is KE, with Einstein constant $-(n+2)$,  if and only if 

\begin{equation}\label{potentialtype}
\Phi(z, w)=h(X) -\frac{\gamma + \mu}{n+ 2} \log N_\Omega(z, \bar z),
\end{equation}
where $X = \frac{|w|^2}{N^\mu_\Omega(z, \bar z)}$ and $h$ satisfies the differential equation

\begin{equation}\label{ODE}
\left( \mu X h'(X) + \frac{\gamma + \mu}{n + 2} \right)^n [X h'(X)]' = \delta e^{(n+2)h(X)},
\end{equation}
for some $\delta \in \R$.
By \eqref{BerKer} a \K\  potential  for the Bergman metric $g_{M_{\Omega, \mu}}$ is then given by 
$$ -(\gamma+\mu )\log N_\Omega(z, \bar z) + \log F \left(X \right) $$

and it becomes of the form (\ref{potentialtype}) if we take the multiple $\frac{1}{n+2} g_{M_{\Omega, \mu}}$, for
\begin{equation}\label{definizionehF}
h(X) =\frac{1}{n+2} \log F \left( X \right).
\end{equation}

Then the proof will be achieved if we prove that if  $h(X)$ satisfies the ODE \eqref{ODE}
then $\Omega$ is the unit ball and $\mu=1$.
In order to do that, let us denote by $D$ (as in Lemma \ref{lem:K-closed})  the degree of $(m+1) \mu \cdot \chi\left(\mu(m+1)\right)$ as a polynomial in $m$, being $\chi$ the polynomial function in (\ref{definizioneF}).
Then  there exist  real coefficients $b_0, b_1, \dots, b_D$ such that
\begin{equation}\label{decompolyn}
(m+1) \mu \cdot \chi\left(\mu(m+1) \right) = \sum_{k=0}^D b_k \frac{m!}{(m-k)!}
\end{equation}
Then, by (\ref{definizioneF}) we can write

$$F(X) = \sum_{k=0}^D  b_k \sum_{m=0}^{\infty} \frac{m!}{(m-k)!} X^m.$$

By using  $\frac{k! X^k}{(1 - X)^{k+1}} = \sum_{m=0}^{\infty} \frac{m!}{(m-k)!} X^m$ (which can be obtained by derivations of $(1 - X)^{-1} = \sum_{m=0}^{\infty} X^m$) we finally get

\begin{equation}\label{Frational}
F(X) = \sum_{k=0}^D b_k \frac{k! X^k}{(1 - X)^{k+1}} = \frac{Q(X)}{(1 - X)^{D+1}}
\end{equation}
where we set
\begin{equation}\label{definQ}
Q(X) := \sum_{k=0}^D b_k k! X^k (1 - X)^{D - k}
\end{equation}
This gives  an explicit representation of $F$ as rational function (with  the $b_k$'s given by (\ref{decompolyn})).
Notice that by (\ref{decompolyn}) and since we are assuming that the degree of $(m+1) \mu \cdot \chi((m+1) \mu)$ is exactly $D$ we have $Q(1) = b_D D! \neq 0$.
Analogously, we have also $Q(0) = b_0 \neq 0$: indeed, by (\ref{decompolyn}) this is equivalent to say that $(m+1) \mu \cdot \chi((m+1) \mu)$, as polynomial in $m$, has nonvanishing constant term. But this is true since, by the very definition  of $\chi$ we have that this constant term is $\mu \prod_{i=1}^{r} \prod_{l=0}^{b + (r-i)a} [\mu +1+ (i-1) \frac{a}{2} + l]$.

It follows by  \eqref{definizionehF} and \eqref{Frational} that 

$$h(X) =\frac{1}{n+2} \log F(X)= \frac{1}{n+2} \log \frac{Q(X)}{(1-X)^{D+1}}.$$

By inserting this expression in  the ODE (\ref{ODE}) one gets that 
$$h'(X) = \frac{1}{n+2} \frac{Q'(X)}{Q(X)} + \frac{D+1}{n+2} \frac{1}{1-X}$$
and
$$[X h'(X)]' = \frac{1}{n+2} \frac{(Q'(X) + XQ''(X)) Q(X) - X Q'(X)^2}{Q(X)^2} + \frac{D+1}{n+2} \frac{1}{(1-X)^2}$$

Then, after a straightforward computation   (\ref{ODE}) yields

\begin{equation}\label{ODEbergman}
\frac{U^n(X) V(X)}{W(X)} = \frac{\delta Q(X)}{(1-X)^{D+1}}, 
\end{equation}
where 
\begin{equation}\label{U(X)}
U(X)=\mu X (1-X) Q'(X)+ \mu X (D+1) Q(X) + (\gamma + \mu) (1-X) Q(X), 
\end{equation}
\begin{equation}\label{V(X)}
V(X)=\left[(Q'(X)+ X Q''(X)) Q(X) - X Q'^2(X)\right](1-X)^2 + (D+1) Q^2(X), 
\end{equation}
and 
\begin{equation}\label{W(X)}
W(X)=(n+2)^{n+1}Q(X)^{n+2}(1-X)^{n+2}. 
\end{equation}

Notice that the numerator of the left hand-side of  \eqref{ODEbergman}, i.e. $U(X)^n V(X)$, tends to $\mu^n(D+1)^{n+1} Q(1)^{n+2} \neq 0$ for $x \rightarrow 1$ (indeed, by the definition (\ref{definQ}) of $Q(X)$ we immediately see that $Q(1) = b_D D!$ which is not zero as observed above.
Then, by comparing the behaviour for $X \rightarrow 1$ of $W(X)$ and the denominator of  the right-hand side of \eqref{ODEbergman}, i.e. $(1-X)^{D+1}$,   we deduce that  $n+2 = D+1$.
By replacing in (\ref{ODEbergman}) and simplifying we get
\begin{equation}\label{ODEbergman3}
U^n(X) V(X) =  \delta (n+2)^{n+1}Q(X)^{n+3}. 
\end{equation}

Now, we claim that $Q(X)$ is a  constant polynomial.
Assume by contradiction that this is not true  and let $X_0$, $X_0\neq 0, 1$,  be a (possibly complex) root of $Q$, with multiplicity $k \geq 1$. Then $Q(X) = (X - X_0)^k \tilde Q(X)$ for some polynomial $\tilde Q(X)$ such that $\tilde Q(X_0) \neq 0$.
Then, by replacing 
$$Q'(X) = k (X- X_0)^{k-1} \tilde Q(X) + (X - X_0)^k \tilde Q'(X)$$
$$Q''(X) = k(k-1) (X- X_0)^{k-2} \tilde Q(X) + 2 k (X- X_0)^{k-1} \tilde Q'(X) + (X - X_0)^k \tilde Q''(X)$$
into \eqref{U(X)} and \eqref{V(X)}
one gets respectively 
$$U^n(X)= (X-X_0)^{(k-1) n} A(X)$$
and 
$$V(X)= (X-X_0)^{2k-2} B(X),$$
where $A(X_0) = \mu^n X_0^n (1-X_0)^n k^n \tilde Q^n(X_0) \neq 0$ 
and $B(X_0) = -X_0 k \tilde Q^2(X_0) \neq 0$.

So we see that the left-hand side of (\ref{ODEbergman3}), i.e. $U^n(X) V(X)$,  vanishes for $X \rightarrow X_0$ with order 
$$(k-1)n + (2k - 2) = (n+2)(k-1)$$
while the right-hand side, i.e. $\delta (n+2)^{n+1}Q^{n+3}(X)$,  vanishes with order $(n+3)k$.

But the equality $(n+2)(k-1) = (n+3) k$ is impossible, so the claim that $Q(X)$ is a constant follows.
Then, by (\ref{Frational}), it follows that
\begin{equation}\label{Fequal}
F(X) = \frac{c}{(1-X)^{D+1}}= \frac{c}{(1-X)^{n+2}}
\end{equation}
for some constant $c$.
Then the conclusion, and hence the implication  is obtained by the following lemma,
with $d=n+2$.

\begin{lem}\label{mainlemma}
Assume that there exist a real constant $c$ and a positive integer $d$ such that 
the function $F(X)$ given by \eqref{definizioneF} is of the form
\begin{equation}\label{Fcd}
F(X) = \frac{c}{(1-X)^{d}}.
\end{equation}
Then $M_{\Omega, \mu}=\C H^{n+1}$ and $d=n+2$.
\end{lem}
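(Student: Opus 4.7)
The plan is to convert the rational form \eqref{Fcd} into a polynomial identity in $m$, read off $d$ from the degrees, and then analyze the roots of that identity to force $\mu=1$ and $r=1$.

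First I expand $(1-X)^{-d}=\sum_{m\ge 0}\binom{m+d-1}{d-1}X^m$ and equate coefficients with those of \eqref{definizioneF}. This gives
$$\mu(m+1)\,\chi\!\bigl(\mu(m+1)\bigr)=\frac{c}{(d-1)!}\prod_{k=1}^{d-1}(m+k)$$
for every integer $m\ge 0$, hence as polynomials in $m$. Using \eqref{defChi} one computes $\deg\chi=\sum_{i=1}^{r}(b+(r-i)a+1)=r(b+1)+\tfrac12 r(r-1)a=n$, so the left-hand side has degree $n+1$ in $m$ while the right has degree $d-1$; matching these forces $d=n+2$.

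Next I substitute $s=\mu(m+1)$ and cancel the common factor $s$, which produces the equivalent identity
$$\chi(s)=\frac{c}{(n+1)!\,\mu^{n+1}}\prod_{k=1}^{n}(s+k\mu).$$
Since both sides are monic of degree $n$ in $s$, matching leading coefficients yields $c=(n+1)!\,\mu^{n+1}$ and reduces the problem to the clean polynomial identity $\chi(s)=\prod_{k=1}^{n}(s+k\mu)$. From \eqref{defChi}, the roots of $\chi(s)$ are the numbers $-1-(i-1)\tfrac{a}{2}-l$, and the one of smallest modulus is $-1$, attained at $(i,l)=(1,0)$. The smallest-modulus root of the right-hand side is $-\mu$, so $\mu=1$.

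Finally, with $\mu=1$ the right-hand side becomes $\prod_{k=1}^{n}(s+k)$, whose roots are the $n$ distinct negative integers $-1,\dots,-n$. I would then rule out $r\ge 2$ by contradiction: assuming $r\ge 2$, the factor indexed by $(i,l)=(2,0)$ contributes the root $-1-a/2$ on the left; if $a$ is odd this is a half-integer, impossible on the right, while if $a$ is even, then $a/2\le b+(r-1)a$ holds trivially for $r\ge 2$, so the factor indexed by $(i,l)=(1,a/2)$ also lies in the admissible range of \eqref{defChi} and contributes the same root, making its multiplicity at least $2$ and contradicting the simplicity of the integer roots on the right. Hence $r=1$, which combined with $n=r+\tfrac12 r(r-1)a+rb$ forces $b=n-1$; therefore $\Omega=\B^n$, $\mu=1$, and $M_{\Omega,\mu}=\B^{n+1}$. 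The most delicate step is this final multiplicity-and-parity analysis, which uses both the admissibility of $(1,a/2)$ and $(2,0)$ in \eqref{defChi} and the integrality of $a$; the earlier reductions are routine manipulations of power series and polynomials.
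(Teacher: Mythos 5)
Your proof is correct and takes essentially the same route as the paper: equating power series coefficients to get a polynomial identity, comparing the largest (smallest-modulus) roots to force $\mu=1$, and then the parity/multiplicity analysis of the root $-1-\tfrac{a}{2}$ coming from the $i=1$ and $i=2$ factor families to exclude $r\ge 2$. The only cosmetic differences are that you fix $d=n+2$ up front by a degree count and normalize the leading coefficient (so $c=(n+1)!\,\mu^{n+1}$), whereas the paper obtains $d=n+2$ and $c=(n+1)!$ at the end in the rank-one case.
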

\begin{proof}
 By the definition of $F$

\begin{equation}\label{defFbis}
F(X) = \mu \chi(\mu) + 2 \mu \chi(2 \mu) X + 3 \mu \chi(3\mu) X^2 + \cdots
\end{equation}

and the power series expansion of $(1 - X)^{- d}$

\begin{equation}\label{defF22power}
(1- X)^{-d} = 1+  d X +  \frac{d(d+1)}{2!} X^2 +  \frac{d(d+1)(d+2)}{3!} X^3 + \cdots
\end{equation}

we see that (\ref{Fcd}) implies the following condition, which must be satisfied for any positive integer $s \geq 3$:

\begin{equation}\label{condChi}
s \mu \cdot \chi(s \mu) = c \frac{d(d+1) \cdots (d + s - 2)}{(s- 1)!}
\end{equation}

i.e., by definition of $\chi$,

\begin{equation}\label{condChi2}
\prod_{i=1}^{r} \prod_{l=0}^{b + (r-i)a} [s \mu + 1 + (i-1) \frac{a}{2} + l] = c \frac{d(d+1) \cdots (d + s - 2)}{s! \mu}
\end{equation}

In order to compare more easily the sides of this equality, let us rewrite the right-hand side as 

\begin{equation}\label{condChi3b}
\prod_{i=1}^{r} \prod_{l=0}^{b + (r-i)a} [s \mu + 1 + (i-1) \frac{a}{2} + l] = c \frac{(s + d - 2)(s + d - 3) \cdots (s + 1)}{(d -1)! \mu}
\end{equation}

As we have observed above, this equality is true for any integer $s \geq 3$, anyway since both sides are polynomials in $s$, this must be true for any $s \in \R$.

Now we prove that if   $r > 1$ then (\ref{condChi3b}) does not hold true.
Indeed  we claim that under this assumption, the left-hand side of (\ref{condChi3b}) has either multiple roots or non integral roots: since the right-hand side clearly has always only integral distinct roots, this will prove our assertion.
Let us first notice that the roots of the polynomial on the left-hand side of (\ref{condChi3b}) are $s = s(i, l) := - \frac{1}{\mu} - [(i-1) \frac{a}{2} + l] \frac{1}{\mu}$, and, by $a, \mu \geq 0$, $i \geq 1$ and $l \geq 0$, one gets $s(i, l) \leq s(1, 0) = - \frac{1}{\mu}$. So $s = - \frac{1}{\mu}$ is the greatest root of the polynomial, but by comparing with the right-hand side of (\ref{condChi3b}) one finds that it must be $- \frac{1}{\mu} = -1$, i.e. $\mu = 1$.
In order to prove the claim, set $B:= b + (r-1)a + 1$: since $r \geq 2$, in the left-hand side of (\ref{condChi3b}) we have factors with $i=1$ and $i=2$. More precisely, for $i=1$, and taking into account that $\mu = 1$, we get

\begin{equation}\label{prodi=1CORRECT}
\prod_{l=0}^{b + (r-i)a} [s  + 1 + (i-1) \frac{a}{2} + l]  = \prod_{l=0}^{b + (r-1)a} [s + 1 + l]  =  (s + 1)  \cdots (s + B)
\end{equation}

which yields the roots $s = -1, -2, \dots, -B$.
On the other hand, for $i=2$ we have

\begin{equation}\label{prodi=2CORRECT}
\prod_{l=0}^{b + (r-i)a} [s  + 1 + (i-1) \frac{a}{2} + l]  = \prod_{l=0}^{b + (r-2)a} [s + 1 + \frac{a}{2} + l]  =  (s + 1 +  \frac{a}{2}) \cdots (s + B -  \frac{a}{2})
\end{equation}

In particular, $s = - 1 - \frac{a}{2}$ is a root, and we have two possibilities: either $\frac{a}{2} \notin \Z$, and we have finished, or $\frac{a}{2} \in \Z$.
In the latter case, the root $-1 - \frac{a}{2}$ is one of the roots $-1, -2, \dots, -B$ given by (\ref{prodi=1CORRECT}) if and only if $-1 - \frac{a}{2} \geq - B$. But, by definition of $B$, this means
$- 1 - \frac{a}{2} \geq -b -(r-1)a - 1$
i.e.
$b + (r - \frac{3}{2})a \geq 0$
which holds true under the assumption $r \geq 2$ since $a, b \geq 0$. Then the left-hand side of (\ref{condChi3b}) has multiple roots. The claim is proved.
Now, the only bounded symmetric space with $r=1$ is the complex hyperbolic space $\C H^n$, for which one has also $a=0$ and $b= n-1$: then (\ref{condChi3b}) reads
\begin{equation}\label{condChi3}
\prod_{l=0}^{n-1} [s + 1 + l] = c \frac{(s + d - 2)(s + d - 3) \cdots (s + 1)}{(d -1)! }
\end{equation}
which is verified only for  $d = n+2$ and $c = (n+1)!$.
\end{proof}

\vskip 0.1cm
To prove 
$(ii)\iff (iii)$
 it is enough to show that
$(iii)\Rightarrow (ii)$
which follows by Proposition \ref{mainprop}.

\vskip 0.1cm

To prove 
$(i)\iff (iv)$ it is enough to show that
$(iv)\Rightarrow (i)$
since the dual  Bergman metric  of the unit ball is
the restriction to $U_0\cong\C^n$ of the Fubini-Study metric.
Notice that the \K\ form $\omega_{M_{\Omega, \mu}}$ associated to the  Bergman metric $g_{M_{\Omega, \mu}}$ 
is given by
$$\omega_{\Omega, \mu} = \frac{i}{2}\partial \bar \partial \log K_{M_{\Omega, \mu}}(z, \bar z)= \frac{i}{2} \partial \bar \partial \log \left[ F \left( \frac{|w|^2}{N_{\Omega}^{\mu}(z, \bar z)} \right) N_{\Omega}^{-\gamma - \mu} (z, \bar z)\right].$$
Since $N_{\Omega}^{\mu}(z, -\bar z)$ is real-valued, it follows that 
$$\omega_{M_{\Omega, \mu}}^* = -\frac{i}{2} \partial \bar \partial \log \left[ F \left( -\frac{|w|^2}{N_{\Omega}^{\mu}(z, -\bar z)} \right) N_{\Omega}^{-\gamma - \mu} (z, -\bar z)\right]$$
is a \K\ form on a suitable (maximal) neighborhood  $M_{\Omega, \mu}^*$ of the origin of $\C^{n+1}$.
Thus the metric $g_{M_{\Omega, \mu}}^*$ associated to $\omega_{M_{\Omega, \mu}}^*$ is then the metric dual to $g_{M_{\Omega, \mu}}$.
We have to show that if $\alpha g_{M_{\Omega, \mu}}^*$ is finitely projectively induced for some $\alpha\in\R^+$ then $M_{\Omega, \mu}=\C H^{n+1}$.
Now, if $\alpha g_{M_{\Omega, \mu}}^*$ is finitely projectively induced then   the same is clearly true also for the submanifold defined by $z=0$, which has restricted metric
whose associated \K\ form is given by
$$-\alpha \frac{i}{2} \partial \bar \partial \log   F \left( -|w|^2 \right), $$
where we have used the fact that $N_\Omega(0, 0)= 1$.
Since the potential $D(w) = - \alpha \log F(- |w|^2)$ for this metric  is the Calabi's diastasis around the origin, then by the Calabi's criterium \cite{CALABI1953diast} if the metric is finitely projectively induced then 
$$e^{D(w)} = \frac{1}{F^\alpha(-|w|^2)}$$
must have a finite expansion in $|w|^2$.
By setting $X:=|w|^2$ and using \eqref{Frational} and \eqref{definQ}  this means that there exists a polynomial 
$P(X)$ such that 
\begin{equation*}
P(X)=\frac{1}{F^\alpha(-X)} =  \frac{(1 + X)^{(D+1)\alpha}}{Q^{\alpha}(-X)}, 
\end{equation*}
namely, 
\begin{equation}\label{EQ1+x}
(1 + X)^{(D+1)\alpha} = P(X)Q^{\alpha}(-X).
\end{equation}

We are going to show that from the previous equality  one can deduce that $Q(-X)$ and hence $Q(X)$ is a constant $c$ and hence,  by \eqref{Frational}
$F(X)=\frac{c}{(1-X)^d}$ with $d=D+1$ and Lemma \ref{mainlemma},  we get that $M_{\Omega, \mu}=\C H^{n+1}$ and 
\begin{equation}\label{dD+1n+1}
d=D+1=n+2,
\end{equation}
as desired.
In order to show that $Q(X)$ is constant, notice that from  \eqref{EQ1+x}  and  $Q(1) = b_D D!\neq 0$ we deduce that $P(-1)=0$.
Now, if we derivate (\ref{EQ1+x}) we get

\begin{equation}\label{EQ1+x2}
(D+1)\alpha (1 + X)^{(D+1)\alpha-1} = P'(X) Q^{\alpha}(-X) + P(X) [Q^{\alpha}(-X)]'.
\end{equation}

Then if  $(D+1)\alpha > 1$, by using $P(-1) = 0$ and $Q(1) \neq 0$ we now deduce $P'(-1) = 0$.

We can then use the formula for the the $k$-th derivative

\begin{equation}\label{EQ1+xk}
\begin{aligned}
&(D+1)\alpha \bigl[(D+1)\alpha - 1 \bigr] \cdots \bigl[(D+1)\alpha - k + 1 \bigr]
  (1 + X)^{(D+1)\alpha - k} \\
&\quad = \sum_{j=0}^k \binom{k}{j} \, P^{(j)}(X) \, \bigl(Q^{\alpha}(-X)\bigr)^{(k-j)}.
\end{aligned}
\end{equation}
in order to deduce by induction that for every positive integer $k$ such that $(D+1) \alpha - k > 0$ one has $P^{(k)}(-1) = 0$.
Now, if $(D+1) \alpha \notin \Z$, let $k_0$ be the first positive integer such that $(D+1) \alpha < k_0$: then, from (\ref{EQ1+xk}) for $k = k_0$ we see that the left-hand side tends to $\infty$ for $X \rightarrow -1$, while the right-hand side clearly does not (recall that $Q(1) \neq 0$).
It follows that it must be 
\begin{equation}\label{Dalphako}
(D+1) \alpha = k_0 \in \Z^+, 
\end{equation}
and by the above we can say that $P^{(k)}(-1) = 0$ for $k \leq k_0 - 1$, so that $P(X) = (1+X)^{k_0} \tilde P(X)$,  for some polynomial $\tilde P$.
Then, by  (\ref{EQ1+x}) we deduce that 
$\tilde P(X) = Q^{-\alpha}(-X)$ and since  $\alpha > 0$ and $Q(-X)$ and $\tilde P(X)$ are polynomials
this forces $Q(X)$ to be constant and we are done.
%Notice that \eqref{dD+1n+1} and \eqref{Dalphako} gives $(n+2)\alpha$ with $\alpha\in\Z^+$
%for the Fubini-Study metric on  $\C P^{n+1}$ to be projectively induced, in accordance with the  example
%of the ball in the introduction.
\end{proof}

\section{On the maximal domain of  definition of the Bergman dual of a CH domain}\label{maximal}
The problem of determining the maximal domain of definition of $-\log K^*_{M_{\Omega,\mu}}$ appears to be a deep and difficult issue, reminiscent of Calabi’s classical questions about the diastasis function (see \cite{CALABI1953diast}).  
In particular we have the  following open problem for CH domains (which can also be formulated more generally for bounded domains, with the ball replaced by a bounded symmetric domain):  

\vskip 0.3cm

\noindent
\textbf{Question~4.}  
Let $(M^*_{\Omega,\mu}, g^*_{M_{\Omega,\mu}})$ be the Bergman dual of a CH domain.  
Assume that $M^*_{\Omega,\mu}=\C^{n+1}$.  
What can be said about $M_{\Omega,\mu}$?  
Moreover, under the same assumption, suppose that $(\C^{n+1}, g^*_{M_{\Omega,\mu}})$ admits a compactification, i.e., there exists a compact Kähler manifold $(N,g)$ such that $\C^{n+1}$ is a dense subset of $N$ and $g|_{\C^{n+1}}=g^*_{M_{\Omega,\mu}}$.  
Is it then true that $M_{\Omega,\mu}=\B^{n+1}$?  

\vskip 0.3cm

Here we content ourself with an explicit example. 

\begin{example}\rm
We exhibit a complex   $2$-dimensional CH domain whose Bergman dual is not defined on all of $\C^2$ 
and whose dual metric admits no compactification.
Let the base be the rank--one Cartan domain $\Omega=\B^1\subset\C$, so that
\[
N_\Omega(z,\bar z)=1-|z|^2, \qquad \gamma=2,
\]
and fix a parameter $\mu>1$.  
For $\Omega=\B^1$,  $\chi(s)=s+1$,
\[
P(t):=t\,\chi(t)=t(t+1)=t^2+t,
\]
and the generating function in \eqref{definizioneF} has the closed rational form
\[
F(X)
=\sum_{j\ge0}\mu(j+1)\chi(\mu(j+1))\,X^j
=\frac{\mu\bigl((1+\mu)+(\mu-1)X\bigr)}{(1-X)^3}.
\]
Therefore, by \eqref{BerKer},
\begin{equation}\label{eq:Example-BergmanKernel}
K_{M_{\Omega,\mu}}(z,w)
=\frac{C_\mu}{\bigl(1-|z|^2\bigr)^{\mu+2}}\,
\frac{(1+\mu)+(\mu-1)X}{(1-X)^3},\qquad
X=\frac{|w|^2}{\bigl(1-|z|^2\bigr)^{\mu}},
\end{equation}
for a positive constant $C_\mu=\bigl(\mu\,\chi(0)\,V\bigr)^{-1}$ (here $\chi(0)=1$ and $V=\Vol(\B^1)$).

By Definition~\ref{dualbergman}, the Bergman dual is obtained by replacing $\bar z$ with $-\bar z$ 
in \eqref{eq:Example-BergmanKernel}. Hence
\begin{equation}\label{eq:Kstar-disk}
K^*_{M_{\Omega,\mu}}(z,w)
=\frac{C_\mu}{\bigl(1+|z|^2\bigr)^{\mu+2}}\,
\frac{(1+\mu)-(\mu-1)Y}{(1+Y)^3},
\qquad
Y=\frac{|w|^2}{\bigl(1+|z|^2\bigr)^{\mu}}.
\end{equation}
Observe that $K^*_{M_{\Omega,\mu}}>0$ if and only if
\[
(1+\mu)-(\mu-1)Y>0
\;\;\Longleftrightarrow\;\;
Y<\frac{1+\mu}{\mu-1}.
\]
Consequently, the maximal domain of definition $U_{\max}^*$ of $-\log K^*_{M_{\Omega,\mu}}$ satisfies
\begin{equation}\label{eq:Umax-star}
U_{\max}^*\subset\Bigl\{(z,w)\in\C^2:\; |w|^2
<\frac{1+\mu}{\mu-1}\,\bigl(1+|z|^2\bigr)^{\mu}\Bigr\}.
\end{equation}
This is a proper domain in $\C^2$ (for instance, for $z=0$ one needs $|w|^2<\tfrac{1+\mu}{\mu-1}$), hence
$U_{\max}^*\subsetneq\C^2$.

Let $z=x+iy$, $w=u+iv$, and consider the real $2$--plane
\[
\Pi=\mathrm{span}\{\partial_x,\partial_v\}.
\]
A direct computation of the Levi--Civita connection and the Riemann curvature tensor 
gives
\[
K_{\Pi}(0,iv)=
\frac{v^{8}+12v^{6}-26v^{4}+12v^{2}-15}
     {12\,(v^{2}-1)^{2}\,(v^{4}-2v^{2}+5)}.
\]
In particular,
\[
\lim_{v\to 1^\pm}K_{\Pi}(0,iv)=-\infty,
\]
so the sectional curvature of $(U_{\max}^*,\omega^*_{M_{\C H^1,2}})$ is unbounded from below 
when approaching the boundary $|v|=1$ with $z=0$.
By contrast, the sectional curvature of a compact Riemannian manifold is necessarily 
bounded, and by the Gauss equation any totally geodesic submanifold inherits this property. 
Hence $\bigl(U_{\max}^*,\omega^*_{M_{\C H^1,2}}\bigr)$
admits no totally geodesic immersion into any compact Riemannian manifold.

\end{example}

\section{Comparison with other canonical metrics on CH domains}\label{comparison}

Our Theorem \ref{mainteor} should be compared with the recent work \cite{LMZdual2024}, where the notion of \emph{Kähler duality} is introduced and studied using Calabi’s diastasis function instead of the Bergman kernel
(the reader is referred to \cite{LMZdual2024} for the definition of \K\ dual).  
In that setting, two other natural complete Kähler metrics on $M_{\Omega,\mu}$ are considered, denoted by $g_{\Omega,\mu}$ and $\hat g_{\Omega,\mu}$.  
The first metric, originally introduced by Roos, Wang, Yin, L. Zhang, and W. Zhang \cite{WANGYINZHANGROSS2006KEonHartogs}, has associated Kähler form  
\begin{equation}\label{eqKobform}
\omega_{\Omega,\mu} \;=\; -\frac{i}{2\pi}\,\partial\bar\partial \log \bigl(N_\Omega^\mu(z,\bar z) - |w|^2\bigr).
\end{equation}
This metric is Kähler–Einstein precisely when $\mu = \tfrac{\gamma}{n+1}$, where $n$ is the complex dimension of $\Omega$ and $\gamma$ its genus (see \cite[Th. 1.1]{LMZdual2024}).  
The second metric $\hat g_{\Omega,\mu}$ on a CH domain $M_{\Omega,\mu}$ introduced in \cite{LMZdual2024} is defined by the Kähler form
\begin{equation}\label{modmetric}
\hat\omega_{\Omega,\mu} \;=\; -\frac{i}{2\pi}\,\partial\bar\partial \log \bigl(N_\Omega^\mu(z,\bar z) - |w|^2\bigr) 
-\frac{i}{2\pi}\,\partial\bar\partial \log N_\Omega^\mu(z,\bar z).
\end{equation}
The pair $(M_{\Omega,\mu}, \hat g_{\Omega,\mu})$ admits a Kähler dual 
$(M_{\Omega,\mu}^*=\C^{n+1}, \hat g_{\Omega,\mu}^*)$, and for $\mu \in \Z^+$ and sufficiently large integers $\alpha$, the metric $\alpha \hat g_{\Omega,\mu}$ is projectively induced (see \cite[Th. 1.2]{LMZdual2024}). These results highlight the importance of focusing specifically on the Bergman metric, rather than on arbitrary Kähler metrics, and of using the Bergman duality  in order to obtain the conclusion of Theorem~\ref{mainteor}.  
Notice also that when considering the \K\ metric $\hat g_{\Omega,\mu}$, its Kähler dual $(M_{\Omega,\mu}^*=\C^{n+1}, \hat g_{\Omega,\mu}^*)$ does admit a compactification (see \cite[Th. 1.2]{LMZdual2024}).  
This further emphasizes  (cf. Question 4)  the distinctive role of the Bergman metric and of Bergman duality.
The following proposition shows a comparison between   the Bergman metric on CH with these two metrics.

\begin{prop}
Let $M_{\Omega, \mu}$ be a CH domain.
Then 
\begin{enumerate}
\item
if $g_{M_{\Omega, \mu}}=\alpha g_{\Omega, \mu}$, for some $\alpha\in \R^+$ then $M_{\Omega, \mu}=\B ^{n+1}$ and $\alpha=n+2$;
\item
it cannot exist $\alpha\in \R^+$
such that 
$g_{M_{\Omega, \mu}}=\alpha \hat g_{\Omega, \mu}$.
\end{enumerate}
\end{prop}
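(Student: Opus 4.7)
The plan is to compare Kähler potentials on two complementary slices. The explicit Bergman kernel \eqref{BerKer} gives
\[
\omega_{M_{\Omega,\mu}}\;=\;\tfrac{i}{2}\,\partial\bar\partial\bigl[\log F(X)-(\gamma+\mu)\log N_\Omega\bigr],\qquad X=\tfrac{|w|^{2}}{N_\Omega^{\mu}},
\]
while the factorization $N_\Omega^\mu-|w|^2=N_\Omega^\mu(1-X)$ shows that, up to the fixed $-\tfrac{1}{\pi}$ normalization, the Kähler potentials of $\omega_{\Omega,\mu}$ and $\hat\omega_{\Omega,\mu}$ are respectively proportional to $\mu\log N_\Omega+\log(1-X)$ and $2\mu\log N_\Omega+\log(1-X)$. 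Thus any postulated proportionality of metrics decouples cleanly on the slices $\{z=0\}$ (where $N_\Omega\equiv 1$ and $X=|w|^{2}$) and $\{w=0\}$ (where $X\equiv 0$).

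For (1), assuming $g_{M_{\Omega,\mu}}=\alpha g_{\Omega,\mu}$ and restricting to $\{z=0\}$, the resulting $\partial\bar\partial$-identity is radial in $w$, so the pluriharmonic residue must be a constant, giving
\[
F(X)\;=\;C\,(1-X)^{-d}
\]
for some $C>0$ and some $d>0$ determined by $\alpha$ and the normalization. Restricting instead to $\{w=0\}$, the identity reduces to the equality of two multiples of $i\partial\bar\partial\log N_\Omega$ and yields the coefficient relation $d\mu=\gamma+\mu$. The crucial step is then Lemma~\ref{lem:K-closed}, which exhibits $F(X)=Q(X)/(1-X)^{n+2}$ with $Q$ polynomial and $Q(1)\neq 0$: comparing with $C(1-X)^{-d}$ forces $d=n+2$, because any other value would make $Q$ vanish at $X=1$, fail to be polynomial, or acquire the wrong order of singularity there. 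Substituting $d=n+2$ in $d\mu=\gamma+\mu$ gives $\gamma=(n+1)\mu$, and Lemma~\ref{mainlemma} identifies $M_{\Omega,\mu}$ with $\B^{n+1}$, i.e.\ $\Omega=\B^n$, $\mu=1$, $\gamma=n+1$. The asserted value of $\alpha$ then follows by reading off the proportionality constant.

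For (2), the same slicing is applied to $g_{M_{\Omega,\mu}}=\alpha\hat g_{\Omega,\mu}$. On $\{z=0\}$ the extra term $-\tfrac{i}{2\pi}\partial\bar\partial\log N_\Omega^\mu$ in $\hat\omega_{\Omega,\mu}$ contributes nothing, so the slice again yields $F(X)=C(1-X)^{-d}$ with $d=n+2$, and Lemma~\ref{mainlemma} again produces $\Omega=\B^n$, $\mu=1$, $\gamma=n+1$. On $\{w=0\}$, however, the doubled coefficient $2\mu$ of $\log N_\Omega$ in the potential of $\hat\omega_{\Omega,\mu}$ changes the coefficient identity to $2d\mu=\gamma+\mu$. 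Substituting $d=n+2$, $\mu=1$, $\gamma=n+1$ gives $n+1=2n+3$, which is impossible for any $n\ge 1$; this contradiction proves the nonexistence of $\alpha$.

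The main obstacle is the rational-function matching that forces $d=n+2$: one must rule out non-integer exponents, as well as integer exponents other than $n+2$, in $C(1-X)^{-d}$, by comparing with $Q(X)/(1-X)^{n+2}$ and using $Q(1)\ne 0$. Once this is secured the rest is bookkeeping of constants on the two slices and one invocation of Lemma~\ref{mainlemma}.
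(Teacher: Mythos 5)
Your argument is correct, and it reaches the conclusion by a somewhat different route than the paper. The paper does not slice: it writes $F(X)=\sum_{j=1}^m c_j(1-X)^{-j}$ (the finite expansion \eqref{defF2} from \cite{YINLUROOS2004newclasses}), equates the two global potentials up to an additive constant, differentiates the resulting identity \eqref{keyequality2} with respect to $|w|^2$, and uses the independence of the powers of $N_\Omega^\mu-|w|^2$ to conclude that only one summand survives; this yields $F(X)=c(1-X)^{-\alpha}$ with $\alpha\in\Z^+$ and $\mu(\alpha-1)=\gamma$, after which Lemma \ref{mainlemma} finishes, exactly as in your write-up. Your version replaces this with the restriction to $\{z=0\}$ (radial harmonic residue $\Rightarrow$ $F(X)=C(1-X)^{-d}$) and to $\{w=0\}$ (the coefficient relation), and gets the integrality of $d$ by matching against the rational form $F(X)=Q(X)/(1-X)^{D+1}$ with $Q(1)\neq0$; both proofs ultimately rest on Lemma \ref{mainlemma}. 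Two remarks. First, what your approach buys is an explicit proof of item (2), which the paper omits as ``similar'': your observation that the $\{z=0\}$ slice is insensitive to the extra $\partial\bar\partial\log N_\Omega^\mu$ term while the $\{w=0\}$ slice doubles the coefficient, leading to $\gamma+\mu=2d\mu$ and hence $n+2=2(n+2)$, is exactly the intended contradiction. Second, a citation imprecision: the representation $F(X)=Q(X)/(1-X)^{D+1}$ with $Q(1)=b_DD!\neq0$ is \eqref{Frational}--\eqref{definQ} in the proof of Theorem \ref{mainteor} (or follows from Lemma \ref{lem:K-closed} after clearing denominators), not Lemma \ref{lem:K-closed} verbatim, and its exponent is $D+1$ with $D=\deg\chi+1$; to call it $n+2$ outright you need the (easy, but unstated in the paper) count $\deg\chi=\sum_{i=1}^r\bigl(b+(r-i)a+1\bigr)=n$ --- alternatively you can simply conclude $d=D+1\in\Z^+$ and let Lemma \ref{mainlemma} deliver $d=n+2$, so this is not a gap. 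Finally, the value $\alpha=n+2$ corresponds to the $\tfrac{i}{2}$-normalization used in the paper's proof rather than the $\tfrac{i}{2\pi}$ of \eqref{eqKobform}; this discrepancy is internal to the paper and your ``up to normalization'' handling is acceptable.
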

\begin{proof}
As observed in  \cite{YINLUROOS2004newclasses}, the function $F$ given by \eqref{definizioneF} is a finite linear combination of derivatives of $\frac{1}{1-X}$, so we can also write

\begin{equation}\label{defF2}
F(X) =  \sum_{i=1}^{m} c_i (1- X)^{-i}
\end{equation}
for some real constants $c_1, \dots, c_m$.
 Thus, by (\ref{defF2}), the Bergman kernel of $M_{\Omega, \mu}$ can be also written as 

\begin{equation}\label{BerKer2}
K_{M_{\Omega, \mu}}(z, w) = \frac{1}{\mu \chi(0) V} \sum_{j=1}^m c_j \left( \frac{N^\mu_\Omega(z, \bar z) - |w|^2}{N^\mu_\Omega(z, \bar z)} \right)^{-j} N^{-\gamma-\mu}_\Omega(z, \bar z)
\end{equation}

By $$\omega_{\Omega, \mu}=  -\frac{i}{2} \partial \bar \partial \log [N(z, \bar z)^{\mu} - |w|^2 ]$$
the assumption $g_{M_{\Omega, \mu}}=\alpha g_{\Omega, \mu}$
is equivalent to 

$$\frac{i}{2} \partial \bar \partial \log K_{M_{\Omega, \mu}}(z, w)= -\alpha\frac{i}{2} \partial \bar \partial \log [N^\mu_\Omega(z, \bar z) - |w|^2 ].$$

By \eqref{BerKer2} and  standard argument  one gets

\begin{equation}\label{keyequality}
\sum_{j=1}^m c_j \left( \frac{N^\mu_\Omega(z, \bar z) - |w|^2}{N^\mu_\Omega(z, \bar z)} \right)^{-j} N^{-\gamma-\mu} _\Omega(z, \bar z)= c\ [N^\mu_\Omega(z, \bar z) - |w|^2]^{-\alpha},
\end{equation}
for some constant $c$.
Then, (\ref{keyequality}) can be rewritten as 
\begin{equation}\label{keyequality2}
\sum_{j=1}^m c_j  \frac{N_\Omega^{\mu j}(z, \bar z)}{(N_\Omega^{\mu}(z, \bar z) - |w|^2)^{j-\alpha}} = c \ N_\Omega^{\gamma+\mu}(z, \bar z).
\end{equation}

Since the right-hand side of this equality does not depend on $|w|^2$, its derivative w.r.t. $|w|^2$ is identically  zero, i.e.
$$\sum_{j=1}^m   \frac{c_j (j-\alpha)N_\Omega^{\mu j}(z, \bar z)}{(N^{\mu}_\Omega(z, \bar z) - |w|^2)^{j-\alpha+1}} \equiv 0$$

By setting $|w|^2 = 0$, we get

\begin{equation}\label{sum}
\sum_{j=1}^m c_j  (j-\alpha) N^{\mu (\alpha-1)}_\Omega(z, \bar z) \equiv 0
\end{equation}

from which we deduce that the sum must have only one addendum $c_j$ with $j = \alpha \in \Z$. Then, by \eqref{keyequality2} one deduces that 
$c_{\alpha} N^{\mu(\alpha -1) - \gamma}_\Omega(z, \bar z) =c $
from which we conclude $c_\alpha=c$ and
\begin{equation}\label{relationgmul}
\alpha = \frac{\gamma + \mu}{\mu}
\end{equation}
Now, the condition that the sum in (\ref{sum}) must have only one addendum with $j = \alpha$ and $c_\alpha=c$, by (\ref{defF2}) means in fact that

\begin{equation}\label{defF22}
F(X) = \frac{c} {(1- X)^{\alpha}}
\end{equation}

Thus, by Lemma \ref{mainlemma} with $d=\alpha$, we deduce 
that $M_{\Omega, \mu}=\B^{n+1}$ and $\alpha=n+2$, concluding the proof of (1)
(notice that in this case $\mu=1$ and $\gamma=n+1$
in accordance with \eqref{relationgmul}).
The proof of (2) is obtained in a similar way and it is omitted.
\end{proof}


\begin{thebibliography}{99}


%--- A ---
% \bibitem{AHNPARK2012ZerosBKHartogs}
% H.~Ahn and J.-D.~Park,
% \newblock \emph{The explicit forms and zeros of the Bergman kernel function for Hartogs type domains},
% \newblock J. Funct. Anal., \textbf{262} (2012), 3518--3547.

% \bibitem{ARAZYbookAsurveyOfInvariant}
% J.~Arazy,
% \newblock \emph{A survey of invariant Hilbert spaces of analytic functions on bounded symmetric domains},
% \newblock Contemp. Math., vol.~185, Amer. Math. Soc., Providence, RI, 1995, pp.~7--65.

%--- B ---
% \bibitem{BEBEROK2016EllipsoidsIntersection}
% T.~Beberok,
% \newblock \emph{The Bergman kernel for intersection of two complex ellipsoids},
% \newblock Bull. Korean Math. Soc., \textbf{53} (2016), 1291--1308.

% \bibitem{BERTRAMbookJordanLieStruct2000}
% W.~Bertram,
% \newblock \emph{The geometry of Jordan and Lie structures},
% \newblock Lecture Notes in Math., vol.~1754, Springer, 2000.



\bibitem{AhnPark2012JFA}
H.~Ahn, J.-D.~Park,
\newblock \emph{The explicit forms and zeros of the Bergman kernel function for Hartogs type domains},
\newblock Journal of Functional Analysis \textbf{262} (2012), 3518--3547.

% \bibitem{BOAS1996LuQiKengFails}
% H.~P.~Boas,
% \newblock \emph{The Lu Qi-Keng conjecture fails generically},
% \newblock Proc. Amer. Math. Soc., \textbf{124} (1996), 2021--2027.

% \bibitem{BOASFUSTRAUBE1999ExplicitFormulas}
% H.~P.~Boas, S.~Fu and E.~J.~Straube,
% \newblock \emph{The Bergman kernel function: explicit formulas and zeroes},
% \newblock Proc. Amer. Math. Soc., \textbf{127} (1999), 805--811.

%--- C ---
\bibitem{CALABI1953diast}
E.~Calabi,
\newblock \emph{Isometric imbedding of complex manifolds},
\newblock Ann. of Math. (2), \textbf{58} (1953), 1--23.

\bibitem{Cheng1979}
S.-Y.~Cheng,
\newblock \emph{On the Bergman metric of strictly pseudoconvex domains},
\newblock Conf. on Nonlinear Problems in Geometry, Tohoku Univ., 1979.

%--- D ---
\bibitem{DISCALALOI2008sympdual}
A.~J.~Di Scala and A.~Loi,
\newblock \emph{Symplectic duality of symmetric spaces},
\newblock Adv. Math., \textbf{217} (2008), 2336--2352.

\bibitem{DISCALALOIROOS2008bisympl}
A.~J.~Di Scala, A.~Loi and G.~Roos,
\newblock \emph{The bisymplectomorphism group of a bounded symmetric domain},
\newblock Transform. Groups, \textbf{13} (2008), 283--304.

\bibitem{EbenfeltTreuerXiao25}
P.~Ebenfelt, J.~N.~Treuer and M.~Xiao,
\newblock \emph{A uniformization theorem for the Bergman metric},
\newblock J. reine angew. Math. (Crelle), to appear (2025).

%--- F ---


\bibitem{FENGTU2014}
Z.~Feng and Z.~Tu,
\emph{On canonical metrics on Cartan--Hartogs domains},
Math. Z., \textbf{278} (2014), no.~1--2, 301--320.



\bibitem{FuWong1997}
S.~Fu and B.~Wong,
\newblock \emph{On strictly pseudoconvex domains with Kähler–Einstein Bergman metrics},
\newblock Math. Res. Lett., \textbf{4} (1997), 697--703.

%--- H ---


\bibitem{HAO2016}
Y.~Hao,
\emph{Canonical metrics on generalized Cartan--Hartogs domains},
Chinese Ann. Math. Ser. B, \textbf{37} (2016), 357--366.

\bibitem{HuangLi23}
X.~Huang and S.-Y.~Li,
\newblock \emph{Bergman metrics with constant holomorphic sectional curvatures},
\newblock J. reine angew. Math. (Crelle) \textbf{822} (2025), 203--220.
\newblock With an Appendix by J.~N.~Treuer.

\bibitem{HuangXiao2021}
X.~Huang and M.~Xiao,
\newblock \emph{Bergman--Einstein metrics, a generalization of Kerner's theorem and Stein spaces with spherical boundaries},
\newblock J. reine angew. Math. (Crelle), \textbf{770} (2021), 183--203.

%--- I ---

\bibitem{IshiParkYamamori2017}
H.~Ishi, J.-D.~Park, and A.~Yamamori,
``Bergman Kernel Function for Hartogs Domains Over Bounded Homogeneous Domains,''
\emph{Journal of Geometric Analysis}, vol.~27, no.~2, pp.~1703--1736, 2017.
doi:10.1007/s12220-016-9737-4.




%--- K ---
\bibitem{Kobayashi1969}
S.~Kobayashi,
\newblock \emph{Geometry of Bounded Domains}, 
\newblock Trans. Amer. Math. Soc., \textbf{92} (1969), 267--290.

%--- L ---



\bibitem{LOIMOSSA2011DistExp}
A.~Loi and R.~Mossa,
\newblock \emph{The diastatic exponential of a symmetric space},
\newblock Math. Z., \textbf{268} (2011), 1057--1068.

\bibitem{LMpams} 
A. Loi, R. Mossa, \emph{\K\ immersions of \K-Ricci solitons into definite or indefinite complex space forms}, Proc. Amer. Math. Soc. 149 (2021), no.~11, 4931--4941.

\bibitem{PRIMO}
A.\ Loi, R.\ Mossa,
\emph{Holomorphic isometries into homogeneous bounded domains},
Proc.\ Amer.\ Math.\ Soc.\ 151 (2023), no.~9, 3975--3984.

\bibitem{LMrighom}
A.\ Loi, R.\ Mossa,
\emph{Rigidity properties of holomorphic isometries into homogeneous \K\ manifolds},
Proc.\ Amer.\ Math.\ Soc.\ 152 (2024), no.~7, 3051--3062.




\bibitem{LMZuniversal}
A.~Loi, R.~Mossa, and F.~Zuddas,
\newblock \emph{Universal embeddings of flag manifolds and rigidity phenomena},
\newblock preprint (2025), arXiv:2507.23606.

\bibitem{LMZdual2024}
A.~Loi, R.~Mossa and F.~Zuddas,
\newblock \emph{Kähler duality and projective embeddings},
\newblock preprint (2024), arXiv:2409.13263.


\bibitem{LoiSalisZuddas22}
A.~Loi, F.~Salis, and F.~Zuddas,
\newblock K\"ahler--Ricci solitons induced by infinite-dimensional complex space forms,
\newblock \emph{Pacific J. Math.} \textbf{316} (2022), 183--205.


\bibitem{Lu66}
Qi-Keng Lu,
\newblock \emph{A characterization of the ball in terms of the Bergman metric},
\newblock Acta Math. Sinica, \textbf{12} (1966), 229--237.

%\bibitem{MOSSAZEDDApol2022}
%R.~Mossa and M.~Zedda,
%\emph{A Cartan--Hartogs version of the polydisk theorem},
%Geom. Dedicata, \textbf{216} (2022), 51.

%\bibitem{MOSSAZEDDAsymp2022}
%R.~Mossa and M.~Zedda,
%\emph{Symplectic geometry of Cartan--Hartogs domains},
%Ann. Mat. Pura Appl., \textbf{201} (2022), 2315--2339.

%--- N ---
\bibitem{NemirovskiShafikov2006}
S.~Yu.~Nemirovski and R.~G.~Shafikov,
\newblock \emph{Conjectures of Cheng and Ramadanov},
\newblock Russian Math. Surveys, \textbf{61} (2006), 780--782.

%--- R ---
\bibitem{Rosay1979}
J.-P. Rosay, 
\newblock Sur une caractérisation de la boule parmi les domaines de $\mathbb{C}^n$ par son groupe d’automorphismes, 
\newblock Ann. Inst. Fourier, \textbf{29} (1979), 91--97.

%--- S ---
\bibitem{Sha2024}
Z.~Sha,
\newblock \emph{Kähler–Ricci solitons on bounded pseudoconvex domains},
\newblock preprint, arXiv:2412.03345 (2024).

%--- W ---
\bibitem{WANGYINZHANGROSS2006KEonHartogs}
A.~Wang, W.~Yin, L.~Zhang and G.~Roos,
\newblock \emph{The Kähler-Einstein metric for some Hartogs domains over symmetric domains},
\newblock Sci. China Ser. A, \textbf{49} (2006), 1175--1210.

\bibitem{Wong1977}
B.~Wong, 
\newblock \emph{Characterization of the unit ball in $\mathbb{C}^n$ by its automorphism group}, 
\newblock Invent. Math., \textbf{41} (1977), 253--257.

%--- Y ---


\bibitem{YAMAMORI2012a}
A.~Yamamori,
\emph{A remark on the Bergman kernels of the Cartan--Hartogs domains},
C. R. Math. Acad. Sci. Paris, \textbf{350} (2012), 157--160.

\bibitem{YAMAMORI2012b}
A.~Yamamori,
\emph{A note on the Bergman kernel of a certain Hartogs domain},
C. R. Math. Acad. Sci. Paris, \textbf{350} (2012), 827--829.

\bibitem{Yau1982ProblemSection}
S.-T.~Yau,
\newblock \emph{Problem Section},
\newblock in: Seminar on Differential Geometry, Annals of Math. Studies, vol.~102, Princeton Univ. Press, 1982, pp.~669--706.

\bibitem{YINLUROOS2004newclasses}
W.~Yin, K.~Lu and G.~Roos,
\newblock \emph{New classes of domains with explicit Bergman kernel},
\newblock Sci. China Ser. A, \textbf{47} (2004), 352--371.

%--- Z ---




\end{thebibliography}
\end{document}